\newtheorem{proposition}{Proposition}[section]
\newtheorem{theorem}[proposition]{Theorem}
\newtheorem{lemma}[proposition]{Lemma}
\newtheorem{corollary}[proposition]{Corollary}
\newcommand{\cst}{\ifmmode\mathrm{C}^*\else{$\mathrm{C}^*$}\fi}
\newcommand{\st}{\;\vline\;}
\newcommand{\tens}{\otimes}
\newcommand{\id}{\mathrm{id}}
\newcommand{\comp}{\!\circ\!}
\newcommand{\I}{\mathbb{1}}
\newcommand{\ph}{\varphi}
\newcommand{\CC}{\mathbb{C}}
\newcommand{\NN}{\mathbb{N}}
\newcommand{\ZZ}{\mathbb{Z}}
\newcommand{\balpha}{\boldsymbol{\alpha}}
\newcommand{\btau}{\boldsymbol{\tau}}
\newcommand{\bu}{\boldsymbol{u}}
\newcommand{\GG}{\mathbb{G}}
\DeclareMathOperator{\Mor}{Mor}
\DeclareMathOperator{\lin}{\mathrm{span}}
\DeclareMathOperator{\bb}{\mathfrak{b\!}}
\renewcommand{\Hat}[1]{\widehat{#1}}
\newcommand{\KK}{\mathbb{K}}
\newcommand{\cH}{\mathscr{H}}
\newcommand{\cA}{\mathscr{A}}
\newcommand{\cB}{\mathscr{B}}
\newcommand{\cC}{\mathscr{C}}
\DeclareMathOperator{\cM}{\mathscr{M}}
\DeclareMathOperator{\QISO}{\widetilde{\mathsf{QISO}}{}^+\!\!\;}
\DeclareMathOperator{\comb}{comb}
\numberwithin{equation}{section}
\begin{document}

\title{Quantum isometry groups of symmetric groups}

\date{\today}

\author{Jan Liszka-Dalecki}
\address{Department of Mathematical Methods in Physics, Faculty of Physics, University of Warsaw
%\email{}
}

\author{Piotr M.~So{\l}tan}
\address{Department of Mathematical Methods in Physics, Faculty of Physics, University of Warsaw\newline
\indent{and}\newline
\indent{}Institute of Mathematics, Polish Academy of Sciences}
\email{piotr.soltan@fuw.edu.pl}
\urladdr{http://www.fuw.edu.pl/~psoltan/en/}

\thanks{Partially supported by Polish government grant no.~N201 1770 33, European Union grant PIRSES-GA-2008-230836 and Polish government matching grant no.~1261/7.PR UE/2009/7.}

\begin{abstract}
We identify the quantum isometry groups of spectral triples built on the symmetric groups with length functions arising from the nearest-neighbor transpositions as generators. It turns out that they are isomorphic to certain ``doubling'' of the group algebras of the respective symmetric groups. We discuss the doubling procedure in the context of regular multiplier Hopf algebras. In the last section we study the dependence of the isometry group of $S_n$ on the choice of generators in the case $n=3$. We show that two different choices of generators lead to non-isomorphic quantum isometry groups which exhaust the list of non-commutative non-cocommutative semisimple Hopf algebras of dimension 12. This provides non-commutative geometric interpretation of these Hopf algebras.
\end{abstract}

% \subjclass[2000]{16W30, 58B34, 16W35} % zmienić przed wyałaniem
\subjclass[2010]{16T05, 58B34. 16T20}

\keywords{quantum group, quantum isometry, spectral triple, Hopf algebra}
\maketitle

\section{Introduction and main results}\label{intro}

Quantum isometry groups of non-commutative manifolds have been introduced in \cite{gos}. Given a spectral triple $(\cA,\cH,D)$ one considers the category of all quantum families (in the sense of \cite{pseu,qs}) of orientation preserving isometries of $(\cA,\cH,D)$. When this category has a universal object, it can be shown that this object is in fact a compact quantum group. This object is denoted by $\QISO(\cA,\cH,D)$ and is called the quantum group of orientation preserving isometries of $(\cA,\cH,D)$. We refer to \cite{gos,bg1,bg2} and also \cite{bs} for details of the theory of quantum isometry groups and numerous examples.

This paper is devoted to the study of an example in the theory of compact quantum groups. For each natural $n$ we will find a universal compact quantum group coacting on the group algebra of the symmetric group $S_n$ satisfying certain additional property related to the theory of non-commutative manifolds which corresponds to the concept of a smooth isometric group action preserving orientation of an oriented Riemannian manifold (\cite{gos,bg2,bg1}). In recent papers \cite{bgPod,so3} interesting interpretation of the universal quantum group acting isometrically on some non-commutative manifold was discovered.

For the theory of compact quantum groups and their relation to Hopf algebras we refer e.g.~to the exposition \cite{cqg}. In Section \ref{doubling} we will use the language of multiplier Hopf algebras developed in \cite{mha,afgd}.

The primary object of this paper is to give a full description of the quantum isometry groups $\QISO\bigl(\CC[S_n],\ell^2(S_n),D\bigr)$, where $S_n$ is the symmetric group and $D$ is the Dirac operator given by multiplication by the \emph{length function} on $S_n$ associated to the set
\begin{equation}\label{S}
\bigl\{s_1,\dotsc,s_{n-1}\bigr\}
\end{equation}
of generators of $S_n$; here and throughout the paper the symbol $s_i$ will denote the transposition $(i,i+1)$ with $i\in\{1,\dotsc,n-1\}$. The length function, by definition, assigns to each element of $x\in{S_n}$ the minimal number of transpositions from \eqref{S} needed to write $x$ as a product of these elements.

Our work is a direct extension of the effort to compute the quantum isometry groups of spectral triples associated to various group \cst-algebras initiated in \cite{bs} and continued e.g.~in \cite{bansk,bans}. The simplest cases of $S_2$ and $S_3$ were already treated in \cite[Sections 3 \& 4]{bs}. Our main result concerns only the fixed set of generators \eqref{S}. See however Section \ref{rank} for a result related to different sets of generators.

Before continuing let us declare that all algebras considered in this paper are $*$-algebras over $\CC$. By an automorphism we shall always mean a $*$-automorphism. We will use some very basic properties of \cst-algebras. For the theory of operator algebras (and in particular \cst-algebras) we refer to classic texts like \cite{dix,tak,ped}.

\subsection{Isometries of the spectral triple on the symmetric group}\label{SpTr}

The spectral triple
\begin{equation*}
\bigl(\CC[S_n],\ell^2(S_n),D\bigr)
\end{equation*}
is very well behaved (mainly because $\CC[S_n]$ is finite dimensional). The notion of a quantum group acting isometrically on $\bigl(\CC[S_n],\ell^2(S_n),D\bigr)$ can in this case be rewritten as in \cite[Section 2]{bs}: let $\GG=(A,\Delta_A)$ be a compact quantum group and let $\alpha\colon\CC[S_n]\to\CC[S_n]\tens{A}$ be a coaction of $\GG$ on $\CC[S_n]$ satisfying the \emph{Podle\'s condition}, i.e.~the set
\begin{equation*}
\alpha\bigl(\CC[S_n]\bigr)(\I\tens{A})=\lin\bigl\{\alpha(x)(\I\tens{a})\st{x}\in\CC[S_n],\:a\in{A}\bigr\}
\end{equation*}
is dense in $\CC[S_n]\tens{A}$ (cf.~\cite{podles,apqs}).

The coaction $\alpha$ is \emph{isometric} if
\begin{enumerate}
\item\label{Taup} for all $x\in\CC[S_n]$ we have $(\btau\tens\id)\alpha(x)=\btau(x)\I$, where $\btau$ is the canonical trace on $\CC[S_n]$ (i.e.~the evaluation at the neutral element $e\in{S_n}$ of an arbitrary $x\in\CC[S_n]$ considered as a function on $S_n$),
\item\label{Dal} we have $\alpha\comp\Hat{D}=(\Hat{D}\tens\I)\comp\alpha$, where $\Hat{D}$ is the operator $D$ considered as a map $\CC[S_n]\to\CC[S_n]$ (instead of $\ell^2(S_n)\to\ell^2(S_n)$).
\end{enumerate}
It is, in fact, easily seen that \eqref{Dal} implies \eqref{Taup}.

In the formulation of the main theorem of this paper we will use the notion of doubling of a multiplier Hopf algebra with invariant integrals. This notion is discussed in detail in Section \ref{doubling}. Let us mention here that this construction is analogous to the operation of taking semidirect product by $\ZZ_2$ in group theory.

Theorem 2.6 of \cite{bs} states that there exists a universal object in the category of all compact quantum groups acting isometrically on $\bigl(\CC[S_n],\ell^2(S_n),D\bigr)$ and it is precisely $\QISO\bigl(\CC[S_n],\ell^2(S_n),D\bigr)$. We will use this fact to prove the following:

\begin{theorem}\label{main}
The quantum isometry group $\QISO\bigl(\CC[S_n],\ell^2(S_n),D\bigr)$ of the spectral triple on $S_n$ is isomorphic to the finite quantum group obtained as doubling of the group algebra $\CC[S_n]$ with standard cocommutative Hopf algebra structure. In particular the \cst-algebra of continuous functions on the quantum isometry group $\QISO\bigl(\CC[S_n],\ell^2(S_n),D\bigr)$ is isomorphic to $\CC[S_n]\oplus\CC[S_n]$. If $\{\sigma_1,\dotsc,\sigma_{n-1}\}$ and $\{\tau_1,\dotsc,\tau_{n-1}\}$ denote the generators \eqref{S} in the first and second copy of $\CC[S_n]$ inside $\CC[S_n]\oplus\CC[S_n]$ respectively then the comultiplication $\Delta$ of $\QISO\bigl(\CC[S_n],\ell^2(S_n),D\bigr)$ is given by
\begin{equation*}
\begin{split}
\Delta(\sigma_i)&=\sigma_i\tens\sigma_i+\tau_i\tens\tau_{n-i},\\
\Delta(\tau_i)&=\sigma_i\tens\tau_i+\tau_i\tens\sigma_{n-i}.
\end{split}
\end{equation*}
\end{theorem}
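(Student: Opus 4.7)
The strategy is to start from an arbitrary isometric coaction $\alpha$ of a compact quantum group $(A,\Delta_A)$ on $(\CC[S_n],\ell^2(S_n),D)$ and reconstruct its data from the Coxeter combinatorics. Since $\hat D$ acts on $\CC[S_n]$ by multiplication with the length function, the isometry condition $\alpha\comp\hat D=(\hat D\tens\I)\comp\alpha$ implies that $\alpha$ preserves each length eigenspace $V_k\subseteq\CC[S_n]$. Preservation of $V_1=\lin\{s_1,\dotsc,s_{n-1}\}$ gives $\alpha(s_i)=\sum_j s_j\tens a_{ji}$ with self-adjoint $a_{ji}\in A$. The Coxeter presentation then translates, via $\alpha$ and projection onto the length subspaces, into polynomial identities on the $a_{ji}$. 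Already $s_i^2=e$ yields $\sum_j a_{ji}^2=\I$, together with $a_{ji}a_{ki}=0$ for $|j-k|=1$ and $a_{ji}a_{ki}+a_{ki}a_{ji}=0$ for $|j-k|\ge 2$; the commutations $s_is_j=s_js_i$ (for $|i-j|\ge 2$) and the braid relations give further quadratic and cubic identities plus length preservation on $V_2$ and $V_3$.

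The central technical step is a support result: one must show that $a_{ji}=0$ unless $j\in\{i,n-i\}$. This reflects the fact that the only automorphisms of the Coxeter diagram $A_{n-1}$ are the identity and the flip $i\mapsto n-i$, so the quantum isometries cannot genuinely mix any other pair of generators. One combines the nilpotency and anticommutation relations above with the higher-length constraints, and uses the combinatorics of reduced expressions in $S_n$ to inductively rule out every off-support contribution.

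With the support in hand, set $\sigma_i:=a_{ii}$ and $\tau_{n-i}:=a_{n-i,i}$ (the case $n$ even, $i=n/2$, where the two coalesce, is handled separately). The remaining relations collapse to $\sigma_i^2+\tau_{n-i}^2=\I$ together with $\sigma_i\tau_{n-i}=\tau_{n-i}\sigma_i=0$, the latter extracted from length preservation on $V_2$ beyond the mere anticommutator. Consequently $p:=\sigma_i^2$ and $q:=\tau_i^2$ are central orthogonal projections summing to $\I$; a short cross-index argument shows they are independent of $i$, giving the decomposition $A=pA\oplus qA$. Verifying that $\{\sigma_i\}\subseteq pA$ and $\{\tau_i\}\subseteq qA$ each satisfy the Coxeter relations of $S_n$ yields a surjection $\CC[S_n]\oplus\CC[S_n]\twoheadrightarrow A$. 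Injectivity, and hence the identification $\QISO\cong\CC[S_n]\oplus\CC[S_n]$, follows from the fact that the doubled algebra equipped with the coaction $\alpha(s_i)=s_i\tens\sigma_i+s_{n-i}\tens\tau_{n-i}$ is itself isometric (a direct verification), so the universal property of $\QISO$ provides a map in the opposite direction inverse to the surjection. Finally, the comultiplication formulas are forced by imposing the coaction axiom $(\id\tens\Delta)\alpha=(\alpha\tens\id)\alpha$ and reading off the coefficients of $s_i\tens-\tens-$ and $s_{n-i}\tens-\tens-$.

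The main obstacle is the support argument. The low-rank cases $n=2,3$ are handled by direct calculation in \cite{bs}, but for $n\ge 4$ one has to propagate the length-preservation constraints through many higher levels, carefully navigating the combinatorics of reduced expressions and braid moves in $S_n$ to exclude each potential off-support entry $a_{ji}$.
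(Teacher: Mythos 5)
Your outline does follow the paper's overall route (coefficient matrix on the length-one subspace, relations from the Coxeter presentation, support on the diagonal and anti-diagonal, two orthogonal central copies of $\CC[S_n]$, universality via an explicitly verified isometric coaction), but the step you yourself single out as central --- $a_{ji}=0$ unless $j\in\{i,n-i\}$ --- is only announced, not proved, and the plan you sketch for it is not viable as stated. You never establish, or even mention, the two tools that make the elimination argument run: the unitarity of the matrix $\bu=(u_{i,j})$, obtained by combining trace preservation on words of length $\le 2$ (which gives $\bu^*\bu=\I$) with the Hopf-algebraic fact that $(\id\tens S)\bu$ inverts $\bu$; and the antipode symmetry $S(u_{i,j})=u_{j,i}$, which converts every column relation you list (all your identities fix the second index) into the corresponding row relation. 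Without the row versions of the quadratic and cubic relations, the inductive computation that kills $u_{1,k}$ for $2\le k\le n-2$ (write $u_{1,k}=\sum_i u_{i,k-1}^2u_{1,k}$ using column normalization, reduce to a single surviving term, multiply by $u_{2,k+1}$, apply a degree-three relation, then peel off the outer rows and columns of $\bu$ and induct on its size) cannot even be set up. Moreover, contrary to your ``many higher levels'' remark, nothing beyond the degree-two and degree-three relations of the Coxeter presentation is needed; what is needed, and absent from your sketch, is also the elementary \cst-algebra lemma that $a=a^*$ and $a^mb=0$ imply $ab=0$, used repeatedly to turn identities involving squares into genuine vanishing of products.

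Two further points. The collapse you assert after the support result ($\sigma_i\tau_j=\tau_j\sigma_i=0$ for all $i,j$, equality of all the projections $\sigma_i^2$, and two commuting families satisfying the $S_n$ relations) is itself a nontrivial case analysis, and for even $n$ the middle entry $c$ of $\bu$ needs a genuinely separate argument (one defines $a_{n/2}=ca_1^2$, $b_{n/2}=cb_1^2$ and exploits braid-type identities involving $c$); ``handled separately'' is not a proof. Also, the theorem asserts identification with the \emph{doubling} of $\CC[S_n]$: even if completed, your argument only produces the Hopf algebra $\bigl(\CC[S_n]\oplus\CC[S_n],\Delta\bigr)$, while matching it with the doubling construction (via the crossed product $\Hat{\cA}\rtimes\ZZ_2$ of the dual and duality, Section 3 of the paper) is untouched. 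Your final universality step is essentially right, except that for an arbitrary isometric coaction the induced map lands in the subalgebra generated by the $a_{ji}$ rather than being onto $A$; this does not affect the conclusion, which only needs that morphism together with the directly verified isometric coaction of $\CC[S_n]\oplus\CC[S_n]$.
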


Let us note that the same phenomenon of the quantum isometry group being isomorphic to the doubling of the original Hopf algebra was already observed in several cases (e.g.~for singly generated groups in \cite[Section 3]{bs}; the same result for the infinite dihedral group $D_\infty$ was communicated to us by A.~Skalski), but is not of universal nature (\cite[Section 5]{bs}).

The final remark of this section concerns possibly non-unital \cst-algebras. The notion of a morphism of such \cst-algebras was introduced already in \cite{pseu} and developed further in e.g.~\cite{unbo}. We refer the reader to the latter publication for the precise definition of the set $\Mor(C,B)$ of morphisms between two arbitrary \cst-algebras $C$ and $B$.

Note also that the conditions \eqref{Taup} and \eqref{Dal} defining an isometric coaction of a compact quantum group on $\bigl(\CC[S_n],\ell^2(S_n),D\bigr)$ make sense for an arbitrary quantum family of maps. More precisely, if $C$ is any \cst-algebra and $\Psi\in\Mor\bigl(\CC[S_n],\CC[S_n]\tens{C}\bigr)$ then we may say that $\Psi$ defines a quantum family of isometries of $\bigl(\CC[S_n],\ell^2(S_n),D\bigr)$ if
\begin{itemize}
\item $(\btau\tens\id)\Psi(x)=\btau(x)\I$, for all $x\in\CC[S_n]$,
\item $\Psi\comp\Hat{D}=(\Hat{D}\tens\I)\comp\Psi$.\footnote{These conditions make sense despite the fact that the image of the morphism $\Psi$ might lie outside $\CC[S_n]\tens{C}$, cf.~\cite{apqs}.}
\end{itemize}
In particular one can take for $C$ a \cst-algebra corresponding to a quantum semigroup $\mathbb{S}=(C,\Delta_C)$. Then, if $\Psi$ is a coaction of $\mathbb{S}$ satisfying the Podle\'s condition, by results of \cite{qbohr} and \cite[Theorem 2.1]{apqs}, we know that there exists a compact quantum group $\bb{\mathbb{S}}=(B,\Delta_B)$ equipped with a quantum semigroup morphism $\chi\in\Mor(B,C)$ and a coaction $\bb\Psi$ of $\mathbb{S}$ on $\CC[S_n]$ such that
\begin{equation*}
\Psi=(\id\tens\chi)\comp\bb\Psi.
\end{equation*}
Moreover $\bb\Psi$ also preserves $\btau$. It is easy to show that this new coaction is an isometric coaction of $\bb\mathbb{S}$ on $\bigl(\CC[S_n],\ell^2(S_n),D\bigr)$. All this shows that considering only compact quantum group coactions is not really restrictive: all quantum semigroup coactions (even non-compact ones) satisfying the Podle\'s condition can always be realized by \emph{compact quantum group} coactions.

In the next section we obtain the full description of the isometry groups of the spectral triples on symmetric groups discussed above. Then, in Section \ref{doubling} we define the doubling procedure and show that, in the special case of a group algebra, the resulting object is indeed a certain doubling of the original. Moreover, when the procedure is applied to the standard cocommutative Hopf algebra structure on $\CC[S_n]$, the doubling turns out to be precisely the quantum group $\QISO\bigl(\CC[S_n],\ell^2(S_n),D\bigr)$. In the last section we deal with the question left open in \cite{bs} whether the quantum isometry group depends on the choice of the set of generators. We show that for two different generating sets for $S_3$ we obtain two different Hopf algebras. By results of \cite{F} these exhaust the list of non-commutative non-cocommutative semisimple Hopf algebras of dimension 12.

\section{Quantum isometry group of \texorpdfstring{$\bigl(\CC[S_n],\ell^2(S_n),D\bigr)$}{(C[G],l2(G),D)}}
\label{qiso}

Let $\GG=(A,\Delta_A)$ be a compact quantum group and let
\begin{equation*}
\alpha\colon\CC[S_n]\longrightarrow\CC[S_n]\tens{A}
\end{equation*}
be a coaction of $\GG$ on $\CC[S_n]$ satisfying the Podle\'s condition. Then, due to finite dimensionality of $\CC[S_n]$, the image of $\alpha$ is contained in the (algebraic) tensor product $\CC[S_n]\tens\cA$, where $\cA$ is the Hopf $*$-algebra canonically associated with $\GG$ (cf.~\cite{podles,podMu,cqg}. Moreover $\alpha$ is a coaction of the Hopf $*$-algebra $(\cA,\Delta_{\cA})$. In particular we have
\begin{equation}\label{counit}
(\id\tens\epsilon)\comp\alpha=\id,
\end{equation}
where $\epsilon$ is the counit of $(\cA,\Delta_{\cA})$.

Now we shall make two assumptions on the coaction $\alpha$:
\begin{itemize}
\item the coaction preserves the subspace spanned by elements of length $1$, i.e.~for any $j\in\{1,\dotsc,n-1\}$ there exist elements $u_{1,j},\dotsc,u_{n-1,j}\in\cA$ such that
\begin{equation}\label{alsj}
\alpha(s_j)=\sum_{i=1}^{n-1}s_i\tens{u_{i,j}}.
\end{equation}
\item the canonical trace $\btau$ on $\CC[S_n]$ is preserved by $\alpha$ on elements of length $0$ and $2$:
\begin{equation}\label{deltakl}
(\btau\tens\id)\alpha(s_ks_l)=
\begin{cases}
\I&k=l,\\0&\text{otherwise}.
\end{cases}
\end{equation}
\end{itemize}
From the above conditions we shall derive a number of properties of $(A,\Delta_A)$ which will help us determine the quantum isometry group of the spectral triple described in Section \ref{intro}. Note that if $\alpha$ is an isometric coaction (as defined in Subsection \ref{SpTr}) then it certainly satisfies these two conditions.

Let us denote by $\bu$ the matrix
\begin{equation*}
\begin{bmatrix}
u_{1,1}&\dotsm&u_{1,n-1}\\
\vdots&\ddots&\vdots\\
u_{n-1,1}&\dotsm&u_{n-1,n-1}
\end{bmatrix}.
\end{equation*}
All matrix elements of $\bu$ are selfadjoint. This is because the generators $\{s_1,\dotsc,s_{n-1}\}$ are selfadjoint, so that for each $j$
\begin{equation*}
\sum_{i=1}^{n-1}s_i\tens{u_{i,j}^*}=\alpha(s_j)^*=\alpha(s_j)=\sum_{i=1}^{n-1}s_i\tens{u_{i,j}}.
\end{equation*}
Now the equality $u_{i,j}^*=u_{i,j}$ follows from linear independence of $\{s_1,\dotsc,s_{n-1}\}$.

\begin{proposition}\label{unitary}
The matrix $\bu$ is unitary.
\end{proposition}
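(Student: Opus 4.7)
My plan is to derive the two halves of unitarity separately: the identity $\bu^*\bu=I$ directly from the trace condition \eqref{deltakl}, and the identity $\bu\bu^*=I$ by exploiting the Hopf $*$-algebra structure of $\cA$ through its antipode.

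For the first identity, I would expand $\alpha(s_ks_l)=\alpha(s_k)\alpha(s_l)$ using \eqref{alsj}, apply $\btau\tens\id$, and observe that $\btau(s_is_j)=\delta_{i,j}$, since $s_is_j=e$ if and only if $i=j$. Combined with \eqref{deltakl} and the already-established selfadjointness $u_{i,j}^*=u_{i,j}$, this yields $\sum_{i=1}^{n-1}u_{i,k}u_{i,l}=\delta_{k,l}\I$, which is precisely the equality $\bu^*\bu=I$ in $M_{n-1}(\cA)$.

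For $\bu\bu^*=I$ I would appeal to Hopf algebra machinery, since $\cA$ need not be finite dimensional and one cannot in general pass from $\bu^*\bu=I$ to $\bu\bu^*=I$ by purely linear-algebraic reasoning. The coaction identity $(\id\tens\Delta_{\cA})\comp\alpha=(\alpha\tens\id)\comp\alpha$ applied to $s_j$ gives, after using the linear independence of the $s_i$, that $\Delta_{\cA}(u_{i,j})=\sum_k u_{i,k}\tens u_{k,j}$, so $\bu$ is a matrix corepresentation of $\cA$. Similarly, \eqref{counit} together with linear independence forces $\epsilon(u_{i,j})=\delta_{i,j}$. The antipode axioms $m(S\tens\id)\Delta_{\cA}=\epsilon(\cdot)\I=m(\id\tens S)\Delta_{\cA}$ then show that the matrix $\bv\in M_{n-1}(\cA)$ with entries $v_{i,j}=S(u_{i,j})$ is a two-sided inverse of $\bu$. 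Since $\bu^*$ is already a left inverse of $\bu$, the standard uniqueness argument $\bu^*=\bu^*(\bu\bv)=(\bu^*\bu)\bv=\bv$ forces $\bu^*=\bv$, and hence $\bu\bu^*=\bu\bv=I$.

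I do not anticipate a substantial obstacle here; the only genuinely delicate point is resisting the temptation to deduce unitarity from one-sided isometry by invoking finite-dimensional linear algebra, which is why the antipode of $\cA$ (rather than any dimension count) is used to supply the missing half.
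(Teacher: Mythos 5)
Your proof is correct and follows essentially the same route as the paper: the identity $\bu^*\bu=\I$ is extracted from the trace condition \eqref{deltakl} together with selfadjointness of the $u_{i,j}$, and the other half comes from showing that $(\id\tens S)\bu$ is an inverse of $\bu$ via the corepresentation identity $\Delta_{\cA}(u_{i,j})=\sum_k u_{i,k}\tens u_{k,j}$, the counit values $\epsilon(u_{i,j})=\delta_{i,j}$, and the antipode axioms, after which uniqueness of the inverse gives $\bu^{-1}=\bu^*$. No gaps; your closing remark about not invoking finite-dimensional linear algebra is exactly the point the paper's antipode argument is designed to handle.
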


\begin{proof}
We have
\begin{equation*}
\begin{split}
(\btau\tens\id)\alpha(s_ks_l)&=(\btau\tens\id)\biggl(\biggl(\,\sum_{i=1}^{n-1}{s_i}\tens{u_{i,k}}\biggr)
\biggl(\,\sum_{j=1}^{n-1}{s_j}\tens{u_{j,l}}\biggr)\biggr)\\
&=(\btau\tens\id)\biggl(\,\sum_{i,j=1}^{n-1}{s_is_j}\tens{u_{i,k}u_{j,l}}\biggr)\\
&=\sum_{i,j=1}^{n-1}\btau(s_is_j)u_{i,k}u_{j,l}
=\sum_{i=1}^{n-1}u_{i,k}u_{i,l}.
\end{split}
\end{equation*}
We also know that all elements $u_{i,j}$ are selfadjoint, so that by formula \eqref{deltakl} we have
\begin{equation}\label{bustbu}
\bu^*\bu=\I.
\end{equation}
On the other hand we know that the matrix $\bu$ is invertible and the inverse is given by
\begin{equation*}
(\id\tens{S})\bu=
\begin{bmatrix}
S(u_{1,1})&\dotsm&S(u_{1,n-1})\\
\vdots&\ddots&\vdots\\
S(u_{n-1,1})&\dotsm&S(u_{n-1,n-1})
\end{bmatrix}.
\end{equation*}
where $S$ is the antipode of $(\cA,\Delta_{\cA})$. Indeed, since $\alpha$ is a coaction, we have
\begin{equation*}
\begin{split}
\sum_{i=1}^{n-1}s_i\tens\Delta_{\cA}(u_{i,j})&=(\id\tens\Delta_{\cA})\alpha(s_j)\\
&=(\alpha\tens\id)\alpha(s_j)=\sum_{i=1}^{n-1}\alpha(s_i)\tens{u_{i,j}}=
\sum_{i=1}^{n-1}\sum_{k=1}^{n-1}s_k\tens{u_{k,i}}\tens{u_{i,j}}.
\end{split}
\end{equation*}
Therefore
\begin{equation*}
\Delta_{\cA}(u_{k,j})=\sum_{i=1}^{n-1}u_{k,i}\tens{u_{i,j}}.
\end{equation*}
The defining properties of the antipode of a Hopf algebra now show that
\begin{equation*}
\sum_{i=1}^{n-1}S(u_{k,i}){u_{i,j}}=
\sum_{i=1}^{n-1}u_{k,i}S(u_{i,j})=\epsilon(u_{k,j})\I.
\end{equation*}
The fact that
\begin{equation*}
\epsilon(u_{k,j})=
\begin{cases}
1&k=j,\\
0&\text{otherwise}
\end{cases}
\end{equation*}
follows from \eqref{alsj} and \eqref{counit}. Therefore $\bu$ is invertible with inverse $(\id\tens{S})\bu$ and by \eqref{bustbu} we have
\begin{equation*}
\bu^{-1}=\bu^*.
\end{equation*}
\end{proof}

Let us note a very useful corollary of the proof of Proposition \ref{unitary}:

\begin{corollary}
For all $i,j\in\{1,\dotsc,n-1\}$ we have
\begin{equation*}
\begin{split}
\Delta_{\cA}(u_{i,j})&=\sum_{k=1}^{n-1}u_{i,k}\tens{u_{k,j}},\\
S(u_{i,j})&=u_{j,i}.
\end{split}
\end{equation*}
\end{corollary}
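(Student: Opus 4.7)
Both identities essentially fall out of the computation already carried out in the proof of Proposition \ref{unitary}, so the plan is to isolate the two arguments cleanly.

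For the comultiplication formula, I would simply re-invoke the coaction identity $(\id\tens\Delta_{\cA})\comp\alpha=(\alpha\tens\id)\comp\alpha$ applied to $s_j$. Expanding both sides using \eqref{alsj} gives
\begin{equation*}
\sum_{i=1}^{n-1}s_i\tens\Delta_{\cA}(u_{i,j})
=\sum_{i,k=1}^{n-1}s_k\tens{u_{k,i}}\tens{u_{i,j}},
\end{equation*}
and the desired formula for $\Delta_{\cA}(u_{i,j})$ follows from the linear independence of $\{s_1,\dotsc,s_{n-1}\}$ in $\CC[S_n]$.

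For the antipode formula, I would combine two facts already established in Proposition \ref{unitary}: first, $(\id\tens{S})\bu$ is the (two-sided) inverse of $\bu$ in $M_{n-1}(\cA)$; second, by \eqref{bustbu} and the selfadjointness of each $u_{i,j}$, the matrix $\bu^*$ with entries $u_{j,i}^*=u_{j,i}$ is also a two-sided inverse of $\bu$. Uniqueness of the inverse in $M_{n-1}(\cA)$ gives $(\id\tens{S})\bu=\bu^*$, and reading off entry $(i,j)$ yields $S(u_{i,j})=u_{j,i}^*=u_{j,i}$.

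There is no real obstacle here — the corollary is essentially a bookkeeping statement extracting the two identities that were used internally in the proof of Proposition \ref{unitary}. The only small point worth making explicit is that $(\id\tens{S})\bu$ is automatically a \emph{two-sided} inverse (so one may compare it with the two-sided inverse $\bu^*$), which follows from the standard antipode identities $m\comp(S\tens\id)\comp\Delta_{\cA}=m\comp(\id\tens{S})\comp\Delta_{\cA}=\epsilon(\cdot)\I$ together with the comultiplication formula just proved.
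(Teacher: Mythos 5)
Your proposal is correct and is essentially the paper's own argument: the paper offers no separate proof, stating the corollary as a by-product of the proof of Proposition \ref{unitary}, which contains exactly your two steps --- the coaction identity plus linear independence of $\{s_1,\dotsc,s_{n-1}\}$ for $\Delta_{\cA}(u_{i,j})$, and the identification $(\id\tens S)\bu=\bu^{-1}=\bu^*$ (using selfadjointness of the entries) for $S(u_{i,j})=u_{j,i}$. The only tiny imprecision is that \eqref{bustbu} by itself gives $\bu^*$ only as a \emph{left} inverse, but since $(\id\tens S)\bu$ is a genuine two-sided inverse (using $\epsilon(u_{k,j})=\delta_{kj}$, which comes from \eqref{alsj} and \eqref{counit}), the equality $\bu^*=(\id\tens S)\bu$ follows just as you conclude.
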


Before continuing our analysis of the coaction $\alpha$ let us recall the following simple fact:

\begin{lemma} \label{zero_repr}
Let $C$ be a \cst-algebra and take $a,b\in{C}$ with $a=a^*$. Then $a^mb=0$ for some $m\in\NN$ implies $ab=0$.
\end{lemma}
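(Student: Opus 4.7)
The plan is to reduce the statement to showing that $a^2b=0$, and then exploit the positivity of $a^2$ via continuous functional calculus. Since $a=a^*$, the \cst-identity yields
\begin{equation*}
\|ab\|^2=\bigl\|(ab)^*(ab)\bigr\|=\|b^*a^2b\|,
\end{equation*}
so it is enough to verify $a^2b=0$: this gives $b^*a^2b=b^*\cdot0=0$ and hence $ab=0$.

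Setting $p:=a^2$ one has $p\geq0$, while the hypothesis delivers $p^mb=a^{2m}b=a^m(a^mb)=0$. I would then apply the continuous functional calculus for $p$ and look at
\begin{equation*}
J=\bigl\{f\in C(\sigma(p))\st f(p)b=0\bigr\},
\end{equation*}
which is a norm-closed two-sided ideal of $C(\sigma(p))$ containing the function $t\mapsto t^m$. A standard fact—the closed ideal generated by $f\in C(K)$ on a compact space $K$ is precisely the set of continuous functions vanishing on the zero set of $f$—ensures that $J$ contains every continuous function on $\sigma(p)\subset[0,\infty)$ that vanishes at $0$. In particular $J$ contains the identity function, whence $pb=a^2b=0$ and the reduction above gives $ab=0$.

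The only point that requires any care is the identification of the closed ideal generated by $t^m$ in $C(\sigma(p))$, but this is routine: any continuous $g$ with $g(0)=0$ is the uniform limit of $t\mapsto t^m g(t)/(t^m+\varepsilon)$ as $\varepsilon\to 0^+$. Thus no substantial obstacle arises and the argument is essentially a short combination of the \cst-identity with functional calculus applied to the positive element $a^2$.
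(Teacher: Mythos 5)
Your argument is correct: the reduction via the \cst-identity $\|ab\|^2=\|b^*a^2b\|$, the passage to $p=a^2\geq 0$ with $p^mb=0$, and the identification of the closed ideal $\{f\in C(\sigma(p))\st f(p)b=0\}$ (which indeed contains every continuous function vanishing at $0$, by your explicit approximation $t\mapsto t^mg(t)/(t^m+\varepsilon)$) all check out, so $a^2b=0$ and hence $ab=0$. Note, however, that the paper offers no proof at all — the lemma is merely ``recalled'' as a simple fact — and the intended argument is presumably the more elementary one that your first line already contains in embryo: from $\|a^kb\|^2=\|b^*a^{2k}b\|\leq\|b^*\|\,\|a^{2k}b\|$ one sees that $a^{2k}b=0$ implies $a^kb=0$; choosing $j$ with $2^j\geq m$ gives $a^{2^j}b=a^{2^j-m}(a^mb)=0$, and repeated halving yields $ab=0$. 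This avoids functional calculus entirely and sidesteps even the (harmless) issue of working in the unitization when $C$ is non-unital; your route buys nothing extra here beyond illustrating the ideal-theoretic description of $C(\sigma(p))$, but it is perfectly valid.
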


\subsection{Implications of the relation \texorpdfstring{$s_i^2=e$}{si2=e}}

Fix $i\in\{1,\dotsc,n-1\}$. We have
\begin{equation*}
\alpha(s_i)^2=\alpha(s_i^2)=e\tens\I,
\end{equation*}
which means
\begin{equation*}
\sum_{j,k=1}^{n-1}s_js_k\tens{u_{j,i}u_{k,i}}=e\tens\I
\end{equation*}
and can be rewritten as
\begin{equation*}
\sum_{j=1}^{n-1}s_j^2\tens{u_{j,i}u_{j,i}}+\sum_{j<k}
\left(s_js_k\tens{u_{j,i}u_{k,i}}+s_ks_j\tens{u_{j,i}u_{k,i}}\right)=e\tens\I
\end{equation*}
Note that the only relations in $S_n$ holding between products of two generators are
\begin{equation}\label{dwael}
\begin{aligned}
s_i^2&=e&&\text{for all }i\in\{1,\dotsc,n-1\},\\
s_is_j&=s_js_i&&\text{for all }i,j\in\{1,\dotsc,n-1\}\text{ with }|i-j|>1.
\end{aligned}
\end{equation}
Remembering that the elements of a group taken as elements of the group algebra are linearly independent we get the following relations:
\begin{subequations}
\begin{align}
&\sum_{j}u_{j,i}^2=\I,\label{sumkw}\\
&u_{j,i}u_{j+1,i}=0=u_{j+1,i}u_{j,i},&&1\leq{j}\leq{n-2},\label{blpion}\\
&u_{j,i}u_{k,i}+u_{k,i}u_{j,i}=0,&&|j-k|>1.\label{dalpion}
\end{align}
\end{subequations}
Relation \eqref{sumkw} follows, in fact, from the unitarity of $\bu$ proved in Proposition \ref{unitary}. Applying the antipode to \eqref{blpion} and \eqref{dalpion} we get
\begin{subequations}
\begin{align}
&u_{i,j+1}u_{i,j}=0=u_{i,j}u_{i,j+1},&&1\leq{j}\leq{n-2},\label{blpoz}\\
&u_{i,k}u_{i,j}+u_{i,j}u_{i,k}=0,&&|j-k|>1.\label{dalpoz}
\end{align}
\end{subequations}

\subsection{Implications of the relation \texorpdfstring{$s_is_j=s_js_i$}{sisj=sjsi} for \texorpdfstring{$|i-j|>1$}{|i-j|>1}}

Now we take $i,j\in\{1,\dotsc,n-1\}$ such that $|i-j|>1$. Then $\alpha(s_i)\alpha(s_j)=\alpha(s_j)\alpha(s_i)$. Therefore
\begin{equation*}
\biggl(\,\sum_{k=1}^{n-1}s_k\tens{u_{k,i}}\biggr)\biggl(\,\sum_{l=1}^{n-1}s_l\tens{u_{l,j}}\biggr)
=\biggl(\,\sum_{m=1}^{n-1}s_m\tens{u_{m,j}}\biggr)\biggl(\,\sum_{p=1}^{n-1}s_p\tens{u_{p,i}}\biggr),
\end{equation*}
i.e.
\begin{equation*}
\sum_{k,l=1}^{n-1}s_ks_l\tens{u_{k,i}}{u_{l,j}}=\sum_{m,p=1}^{n-1}s_ms_p\tens{u_{m,j}}{u_{p,i}}.
\end{equation*}

Recalling \eqref{dwael} and the linear independence of the generators in the group algebra we get
\begin{subequations}
\begin{align}
\sum_{k=1}^{n-1} u_{k,i}u_{k,j}&=\sum_{k=1}^{n-1} u_{k,j}u_{k,i},\label{suma_unit_dwa}\\
u_{k,i}u_{k+1,j}&=u_{k,j}u_{k+1,i},\label{konik1_podw}\\
u_{k+1,i}u_{k,j}&=u_{k+1,j}u_{k,i},\label{konik2_podw}\\
u_{k,i}u_{l,j}+u_{l,i}u_{k,j}&=u_{k,j}u_{l,i}+u_{l,j}u_{k,i},&&|k-l|>1.\label{krzyzyk}
\end{align}
\end{subequations}

Both sides of \eqref{suma_unit_dwa} are equal to 0 by unitarity of $\bu$. Moreover multiplying \eqref{konik1_podw} from the left by $u_{k,i}$ and using \eqref{dalpoz} and \eqref{blpion} we obtain
\begin{equation*}
u_{k,i}^2u_{k+1,j}=u_{k,i}u_{k,j}u_{k+1,i}=-u_{k,j}u_{k,i}u_{k+1,i}=0.
\end{equation*}
By Lemma \ref{zero_repr}, $u_{k,i}u_{k+1,j}=0$ and both sides of \eqref{konik1_podw} are actually $0$. We can do the same trick starting with \eqref{konik2_podw} thus
\begin{subequations}\label{Konik}
\begin{align}
u_{k,i}u_{k+1,j}&=0=u_{k,j}u_{k+1,i},\label{konik1}\\
u_{k+1,i}u_{k,j}&=0=u_{k+1,j}u_{k,i}\label{konik2}
\end{align}
\end{subequations}
for all $k\in\{1,\dotsc,n-2\}$.

Applying the antipode to \eqref{konik1}, \eqref{konik2} and \eqref{krzyzyk} yields
\begin{subequations}\label{Konik'}
\begin{align}
u_{j, k+1}u_{i,k}&=0=u_{i,k+1}u_{j,k},\label{konik1'}\\
u_{j,k}u_{i,k+1}&=0=u_{i,k}u_{j,k+1},\label{konik2'}\\
u_{j,l}u_{i,k}+u_{j,k}u_{i,l}&=u_{i,l}u_{j,k}+u_{i,k}u_{j,l},&&|k-l|>1.\label{krzyzyk'}
\end{align}
\end{subequations}
Putting $k=i$ and $l=j$ in equations \eqref{krzyzyk} and \eqref{krzyzyk'} and subtracting them we get
\begin{equation}\label{commut}
\begin{split}
u_{i,i}u_{j,j}&=u_{j,j}u_{i,i},\\
u_{i,j}u_{j,i}&=u_{j,i}u_{i,j}
\end{split}
\end{equation}
whenever $|i-j|>1$.

\subsection{Implications of the relation \texorpdfstring{$s_is_{i+1}s_i=s_{i+1}s_is_{i+1}$}{sisi+1si=si+1sisi+1}}

We now fix $i\in\{1,\dotsc,n-2\}$. We have $\alpha(s_i)\alpha(s_{i+1})\alpha(s_i)=\alpha(s_{i+1})\alpha(s_i)\alpha(s_{i+1})$, which reads
\begin{equation*}
\begin{split}
\biggl(\,
\sum_{j=1}^{n-1}s_j\tens{u_{j,i}}
\biggr)
&\biggl(\,
\sum_{k=1}^{n-1}s_k\tens{u_{k,i+1}}
\biggr)
\biggl(\,
\sum_{l=1}^{n-1}s_l\tens{u_{l,i}}
\biggr)\\
&=
\biggl(\,
\sum_{p=1}^{n-1}s_p\tens{u_{p,i+1}}
\biggr)
\biggl(\,
\sum_{q=1}^{n-1}s_q\tens{u_{q,i}}
\biggr)
\biggl(\,
\sum_{r=1}^{n-1}s_r\tens{u_{r,i-1}}
\biggr)
\end{split}
\end{equation*}
in coordinates. Thus
\begin{equation*}
\sum_{j,k,l}s_js_ks_l\tens{u_{j,i}u_{k,i+1}u_{l,i}}
=\sum_{p,q,r}s_ps_qs_r\tens{u_{p,i+1}u_{q,i}u_{r,i+1}}.
\end{equation*}
It is easy to see there is only one three letter word representing the element $x=s_qs_{q+1}s_{q+2}$. It follows that
\begin{equation} \label{haczyk_podw}
u_{q,i}u_{q+1,i+1}u_{q+2,i}=u_{q,i+1}u_{q+1,i}u_{q+2,i+1}.
\end{equation}
Similarly, there is only one three letter word representing the element $x=s_{q+2}s_{q+1}s_{q}$. Thus
\begin{equation} \label{haczyk_podw'}
u_{q+2,i}u_{q+1,i+1}u_{q,i}=u_{q+2,i+1}u_{q+1,i}u_{q,i+1}.
\end{equation}

Multiplying both sides of \eqref{haczyk_podw} from the left by $u_{q,i}$ and using \eqref{blpoz} we obtain
\begin{equation*}
u_{q,i}^2u_{q+1,i+1}u_{q+2,i}=u_{q,i}u_{q,i+1}u_{q+1,i}u_{q+2,i+1}=0,
\end{equation*}
so again, by Lemma \ref{zero_repr} (and again by \eqref{haczyk_podw}), we have
\begin{equation} \label{haczyk}
u_{q,i}u_{q+1,i+1}u_{q+2,i}=0=u_{q,i+1}u_{q+1,i}u_{q+2,i+1}.
\end{equation}
Similarly, starting from \eqref{haczyk_podw'}, we find that
\begin{equation} \label{haczyk'}
u_{q+2,i}u_{q+1,i+1}u_{q,i}=0=u_{q+2,i+1}u_{q+1,i}u_{q,i+1}.
\end{equation}
Applying the antipode to \eqref{haczyk} and \eqref{haczyk'} yields
\begin{subequations}
\begin{align}
u_{i,q+2}u_{i+1,q+1}u_{i,q}&=0=u_{i+1,q+2}u_{i,q+1}u_{i+1,q},\label{haczyk_poz}\\
u_{i,q}u_{i+1,q+1}u_{i,q+2}&=0=u_{i+1,q}u_{i,q+1}u_{i+1,q+2}.\label{haczyk_poz'}
\end{align}
\end{subequations}
These are, of course, not all the relations on matrix elements of $\bu$ that follow from the braid relations on the generators \eqref{S}. We shall return to this problem in Subsection \ref{here}.

\subsection{Determination of \texorpdfstring{$\QISO\bigl(\CC[S_n],\ell^2(S_n),D\bigr)$}{QISO+(C[Sn],l2(Sn),D)}}\label{here}

\begin{proposition}
The only potentially non-zero elements of the matrix $\bu$ are on the diagonal and anti-diagonal.
\end{proposition}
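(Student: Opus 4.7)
The plan is to show $u_{i,j}=0$ whenever $(i,j)$ is off both the diagonal and the anti-diagonal of $\bu$, i.e.\ $i\neq j$ and $i+j\neq n$. Since every $u_{i,j}$ is selfadjoint, it suffices to prove $u_{i,j}^2=0$ for such pairs.

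First I would exploit row-unitarity of $\bu$. The horizontal adjacency relations \eqref{blpoz} yield $u_{i,k}^2 u_{i,j}^2=0$ for $|k-j|=1$, while the horizontal anti-commutation \eqref{dalpoz} implies that $u_{i,k}^2$ commutes with $u_{i,j}$ (hence with $u_{i,j}^2$) whenever $|k-j|>1$. Multiplying the row identity $\sum_k u_{i,k}^2=\I$ from the right by $u_{i,j}^2$ therefore gives
\begin{equation*}
u_{i,j}^{4}+\sum_{|k-j|>1} u_{i,k}^2 u_{i,j}^2 = u_{i,j}^2.
\end{equation*}
For small $n$ (e.g.\ $n=4$) no distant terms appear and one immediately obtains $u_{i,j}^4=u_{i,j}^2$, so that $u_{i,j}^2$ is a projection. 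In general I would aim to show, using the horizontal konik relations \eqref{konik1'}, \eqref{konik2'} together with the horizontal braid relations \eqref{haczyk_poz}, \eqref{haczyk_poz'}, that the distant terms vanish in the off-pattern positions as well, reducing the problem to showing that a specific projection $p:=u_{i,j}^2$ is zero.

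The next step would be to force $p=0$. The route I would follow is to expand $\alpha(s_{j-1}s_j s_{j-1})=\alpha(s_j s_{j-1} s_j)$ (and similarly for $s_j s_{j+1} s_j$) and extract the coefficient of \emph{every} length-three element of $S_n$, not only the ``extremal'' ones $s_q s_{q+1}s_{q+2}$ already used in deriving \eqref{haczyk_podw} and \eqref{haczyk_podw'}. Each new coefficient-matching yields an equality between two sums of triple products of matrix elements of $\bu$; combined with the vertical relations \eqref{blpion}, \eqref{dalpion} and the konik identities \eqref{Konik}, \eqref{Konik'}, many of these triple products can be shown to vanish. Multiplying such identities by $u_{i,j}$ on appropriate sides and invoking Lemma \ref{zero_repr} should produce annihilations $v\cdot p=0$ for enough entries $v$ to conclude, after a final comparison with the column identity $\sum_\ell u_{\ell,j}^2=\I$, that $p=0$.

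The classical reason the proposition is true is that the Coxeter diagram of $S_n$ is the path $1{-}2{-}\dotsb{-}(n-1)$, whose only graph automorphisms are the identity and the reflection $i\mapsto n-i$; any permutation of the generators respecting the Coxeter relations must therefore be one of these two, and the matrix $\bu$ of a classical coaction would be supported on the diagonal or anti-diagonal accordingly. The main difficulty in the quantum proof will be the combinatorial bookkeeping: the relations in play are many, and they must be chained in the right order. I expect the final argument to proceed either by induction on $n$ or by a case analysis on the distance of $i$ from the set $\{j,n-j\}$.
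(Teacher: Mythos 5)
There is a genuine gap: what you have written is a strategy outline, not a proof. The two decisive steps are exactly the ones you leave as intentions: (i) showing that the ``distant'' terms $u_{i,k}^2u_{i,j}^2$ with $|k-j|>1$ vanish at off-pattern positions, and (ii) producing enough annihilations $v\,u_{i,j}^2=0$ to force $u_{i,j}^2=0$. Your own phrasing (``I would aim to show\dots'', ``should produce\dots'', ``I expect the final argument to proceed either by induction on $n$ or by a case analysis'') concedes that the chain of identities is not carried out, and that chain is the entire content of the proposition. Moreover, your proposed source of extra relations --- extracting the coefficient of \emph{every} length-three element in $\alpha(s_{j-1}s_js_{j-1})=\alpha(s_js_{j-1}s_j)$ --- runs into a concrete obstacle: generic length-three elements of $S_n$ have several reduced words (because distant generators commute), so the matching of coefficients yields equalities between \emph{sums} of triple products rather than single-term identities, and disentangling those sums is precisely the hard combinatorics you have not done. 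This is why the paper extracts coefficients only of $s_qs_{q+1}s_{q+2}$ and $s_{q+2}s_{q+1}s_q$, the elements with a unique three-letter expression, obtaining \eqref{haczyk_poz} and \eqref{haczyk_poz'}; no further relations are needed.

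The missing idea is the organization of the argument: instead of attacking a general off-pattern entry $u_{i,j}$, the paper first kills the boundary. For $2\leq k\leq n-2$ one writes $u_{1,k}=\sum_i u_{i,k-1}^2u_{1,k}$ by \eqref{sumkw}, reduces it to $u_{1,k}=u_{2,k-1}^2u_{1,k}$ via \eqref{konik2'} and \eqref{blpoz}, then multiplies on the right by $u_{2,k+1}$ and uses \eqref{haczyk_poz'} to get $u_{1,k}u_{2,k+1}=0$; a second application of \eqref{sumkw} (for column $k+1$) gives $u_{1,k}=(u_{1,k}u_{2,k+1})u_{2,k+1}=0$. The antipode and the same technique for the last row/column dispose of all first/last row and column entries off the corners, and then the central $(n-3)\times(n-3)$ submatrix inherits the hypotheses (its rows and columns still have squares summing to $\I$), so one concludes by induction, terminating at a $2\times2$ or $1\times1$ block. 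Without this peeling-and-induction scheme (or an equivalent explicit chain of identities for arbitrary $(i,j)$), your reduction to ``$p:=u_{i,j}^2$ is a projection, now show $p=0$'' does not close; as a side remark, even the illustrative claim that for $n=4$ no distant terms occur is false for the positions $(2,1)$, $(1,2)$ read along their rows, e.g.\ $u_{2,3}^2u_{2,1}^2$ survives your first identity and must be dealt with by the relations, not by the size of $n$.
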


\begin{proof}
Take $k\in\{2,\dotsc,n-2\}$. We will now show that $u_{1,k}=0$. First of all, by \eqref{sumkw}, we have
\begin{equation*}
u_{1,k}=\sum_{i=1}^{n-1}u_{i,k-1}^2u_{1,k}.
\end{equation*}
Now \eqref{konik2'} shows that the sum above has at most two non-zero terms:
\begin{equation*}
u_{1,k}=u_{1,k-1}^2u_{1,k}+u_{2,k-1}^2u_{1,k}.
\end{equation*}
Furthermore, by \eqref{blpoz}, the first of these terms is zero, so
\begin{equation*}
u_{1,k}=u_{2,k-1}^2u_{1,k}.
\end{equation*}
Multiplying both sides of this relation from the right by $u_{2,k+1}$ we get
\begin{equation*}
u_{1,k}u_{2,k+1}=u_{2,k-1}(u_{2,k-1}u_{1,k}u_{2,k+1}),
\end{equation*}
which is zero by \eqref{haczyk_poz'}. Thus
\begin{equation}\label{skos}
u_{1,k}u_{2,k+1}=0.
\end{equation}
Now, again by \eqref{sumkw}, we have
\begin{equation*}
u_{1,k}=\sum_{i=1}^{n-1}u_{1,k}u_{i,k+1}^2
\end{equation*}
and the sum contains only one the term (by \eqref{konik2'} and \eqref{blpoz}):
\begin{equation*}
u_{1,k}=u_{1,k}u_{2,k+1}^2=(u_{1,k}u_{2,k+1})u_{2,k+1}.
\end{equation*}
But by \eqref{skos} the product in parentheses is zero, so $u_{1,k}=0$.

Applying the antipode we also find that
\begin{equation*}
u_{k,1}=0,\qquad{2}\leq{k}\leq{n-2}.
\end{equation*}

Using the same technique to treat elements in the last row of $\bu$ we also conclude that
\begin{equation*}
u_{k,n-1}=u_{n-1,k}=0,\qquad{2}\leq{k}\leq{n-2}.
\end{equation*}
This means that
\begin{equation}\label{uup}
\bu=
\begin{bmatrix}
u_{1,1}&\begin{matrix}0\;\;&\dotsm&\;\;0\end{matrix}&u_{1,n-1}\\
\begin{matrix}0\\\vdots\\0\end{matrix}&
\left[\begin{smallmatrix}
u_{2,2}&\dotsm&u_{2,n-2}\\
\vdots&\ddots&\vdots\\
u_{n-2,2}&\dotsm&u_{n-2,n-2}
\end{smallmatrix}\right]
&\begin{matrix}0\\\vdots\\0\end{matrix}\\
u_{n-1,1}&\begin{matrix}0\;\;&\dotsm&\;\;0\end{matrix}&u_{n-1,n-1}
\end{bmatrix}
\end{equation}
Consider now the sub-matrix $\boldsymbol{u'}$ of $\bu$ chosen as indicated in \eqref{uup}. Then the technique used to deal with the first and last columns and rows of $\bu$ applies in exactly the same way to $\boldsymbol{u'}$ (note that the sum of squares of elements of rows/columns of $\boldsymbol{u'}$ is clearly equal to $\I$). Thus we can proceed by induction and, depending on the parity of $n$, arrive at either a $2\times{2}$ or $1\times{1}$ matrix. In both cases it is comprised solely of diagonal and anti-diagonal elements.
\end{proof}

We now know that the matrix $\bu$ must have the following special form depending on the parity of $n$:
\begin{center}
\bigskip
\begin{tabular}{c@{\quad}|@{\quad}c}
$n=2p+1$&$n=2p$\\
\hline
\\
$\bu=\left[\begin{smallmatrix}
a_1   &        &       &       &       &       &        &b_1   \\
      &a_2     &       &       &       &       &b_2     &      \\
      &        &\ddots &       &       &\iddots&        &      \\
      &        &       &a_p    &b_p    &       &        &      \\
      &        &       &b_{p+1}&a_{p+1}&       &        &      \\
      &        &\iddots&       &       &\ddots &        &      \\
      &b_{2p-1}&       &       &       &       &a_{2p-1}&      \\
b_{2p}&        &       &       &       &       &        &a_{2p}
\end{smallmatrix}\right]$
&
$\bu=\left[\begin{smallmatrix}
a_1           &       &       &     &       &       &b_1    \\
              &\ddots &       &     &       &\iddots&       \\
              &       &a_{p-1}&\quad&b_{p-1}&       &       \\
\phantom{\mid}&       &       &c    &       &       &       \\
              &       &b_{p+1}&     &a_{p+1}&       &       \\
              &\iddots&       &     &       &\ddots &       \\
b_{2p-1}      &       &       &     &       &       &a_{2p-1}
\end{smallmatrix}\right]$\\
\\
\end{tabular}
\end{center}
where $\{a_1,\dotsc,a_{2p},b_1,\dotsc,b_{2p}\}$ (and
$\{a_1,\dotsc,a_{p-1},a_{p+1},\dotsc,a_{2p-1},b_1,\dotsc,b_{p-1},b_{p+1},\dotsc,b_{2p-1},c\}$ respectively) are certain elements of $\cA$. We will use these symbols until the end of this section.

In the notation introduced above, the formula for the coaction $\alpha$ simplifies to:
\begin{equation}\label{alab}
\alpha(s_i)=s_i\tens{a_i}+s_{n-i}\tens{b_{n-i}}
\end{equation}
(valid for for all $i$ if $n=2p+1$, and for $i\neq{p}$ for $n=2p$). In the situation $n=2p$, $i=p$ we have
\begin{equation*}
\alpha(s_p)=s_p\tens{c}.
\end{equation*}

Let us investigate some consequences of this new formula for $\alpha$ in the case $n=2p$. We have $\alpha(s_ps_{p+1}s_p)=\alpha(s_{p+1}s_ps_{p+1})$, so
\begin{equation*}
\begin{split}
s_ps_{p+1}s_p\tens\,&{ca_{p+1}c}+s_ps_{p-1}s_p\tens{cb_{p-1}c}
=s_{p+1}s_ps_{p+1}\tens{a_{p+1}ca_{p+1}}\\
&+s_{p-1}s_ps_{p+1}\tens{b_{p-1}ca_{p+1}}
+s_{p+1}s_ps_{p-1}\tens{a_{p+1}cb_{p-1}}
+s_{p-1}s_ps_{p-1}\tens{b_{p-1}cb_{p-1}}.
\end{split}
\end{equation*}
This formula simplifies by \eqref{haczyk} and \eqref{haczyk'} to read
\begin{equation*}
\begin{split}
s_ps_{p+1}s_p\tens{ca_{p+1}c}&+s_ps_{p-1}s_p\tens{cb_{p-1}c}\\
&=s_{p+1}s_ps_{p+1}\tens{a_{p+1}ca_{p+1}}+s_{p-1}s_ps_{p-1}\tens{b_{p-1}cb_{p-1}}.
\end{split}
\end{equation*}
Applying the same reasoning to the formula $\alpha(s_{p-1}s_ps_{p-1})=\alpha(s_ps_{p-1}s_p)$ we get
\begin{equation*}
\begin{split}
s_ps_{p-1}s_p\tens{ca_{p-1}c}&+s_ps_{p+1}s_p\tens{cb_{p+1}c}\\
&=s_{p-1}s_ps_{p-1}\tens{a_{p-1}ca_{p-1}}+s_{p+1}s_ps_{p+1}\tens{b_{p+1}cb_{p+1}}.
\end{split}
\end{equation*}
Thus we obtain:
\begin{subequations}\label{cbraid}
\begin{align}
ca_{p+1}c&=a_{p+1}ca_{p+1},\label{aca'}\\
cb_{p-1}c&=b_{p-1}cb_{p-1},\nonumber\\
ca_{p-1}c&=a_{p-1}ca_{p-1},\label{aca}\\
cb_{p+1}c&=b_{p+1}cb_{p+1}.\nonumber
\end{align}
\end{subequations}

Before stating the next theorem let us adopt the following convention: in the case of even $n=2p$ we will \emph{define} $a_p$ and $b_p$ as
\begin{equation}\label{Defab}
a_p=ca_1^2,\qquad{b_p=cb_1^2}.
\end{equation}
Since $a_1^2+b_1^2=\I$ we have $a_p+b_p=c$. Note that with this definition of $a_p$ and $b_p$ formula \eqref{alab} applies universally --- regardless of parity of $n$. From now on whenever we refer to either
\begin{equation*}
\bigl\{a_1,\dotsc,a_{n-1}\bigr\}\quad\text{or}\quad
\bigl\{b_1,\dotsc,b_{n-1}\bigr\}
\end{equation*}
we mean the elements on the diagonal/anti-diagonal of $\bu$ (in case of odd $n$) or the elements on the diagonal/anti-diagonal of $\bu$ with $a_p$  or $b_p$ (as defined in \eqref{Defab}) instead of $c$ (in case of even $n$).

\begin{theorem}\label{glow}
\noindent\begin{enumerate}
\item\label{iloczyny} For any $i,j\in\{1,\dotsc,n-1\}$ we have $a_ib_j=b_ja_i=0$,
\item\label{cproj} for any $j\in\{1,\dotsc,n-1\}$ the elements $a_i^2$ and $b_i^2$ are central projections in $A$,
\item\label{cproj2} we have $a_1^2=a_2^2=\dotsm=a_{n-1}^2$ and $b_1^2=b_2^2=\dotsm=b_{n-1}^2=\I-a_1^2$,
\item\label{braid} the elements of $\{a_1,\dotsc,a_{n-1}\}$ and $\{b_1,\dotsc,b_{n-1}\}$ satisfy
\begin{enumerate}
\item $a_ia_{i+1}a_i=a_{i+1}a_ia_{i+1}$ and $b_ib_{i+1}b_i=b_{i+1}b_ib_{i+1}$ for all $i\in\{1,\dotsc,n-1\}$,
\item $a_ia_j=a_ja_i$ and $b_ib_j=b_jb_i$ whenever $|i-j|>1$.
\end{enumerate}
\end{enumerate}
\end{theorem}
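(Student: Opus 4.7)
The four parts of the theorem will be proved in the order \eqref{iloczyny}, \eqref{braid}, \eqref{cproj2}, \eqref{cproj}. Part \eqref{iloczyny} is the technical core and the main obstacle lies precisely in its case analysis; once it is in place, the remaining parts follow with substantially less effort.

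For \eqref{iloczyny}, the identities $a_ib_j=0=b_ja_i$ read, via \eqref{alab}, as $u_{i,i}u_{j,n-j}=0=u_{j,n-j}u_{i,i}$. The plan is to classify the pair $(i,j)$ according to whether the two matrix entries lie in the same row ($i=j$), in the same column ($i+j=n$), or in distinct row and column. In the first two cases either a direct vanishing (\eqref{blpion}, \eqref{blpoz}) or an anticommutation (\eqref{dalpion}, \eqref{dalpoz}) applies; anticommutation is upgraded to vanishing by the multiply-then-invoke-Lemma-\ref{zero_repr} technique used repeatedly in Subsection~\ref{here}. In the distinct-row-and-column case the konik relations \eqref{konik1}--\eqref{konik2'} and the braid triples \eqref{haczyk}--\eqref{haczyk_poz'} combine to give the vanishing, again with the aid of Lemma~\ref{zero_repr} in some sub-cases. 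The case $n=2p$ with $i$ or $j$ equal to $p$ needs separate treatment: one has to combine \eqref{cbraid} with the definitions \eqref{Defab} so that the central entry $c=a_p+b_p$ is brought into line with the pattern established for the other indices. The book-keeping here is the chief source of technical trouble.

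With \eqref{iloczyny} in hand, \eqref{braid} is almost automatic. Applying $\alpha$ to $s_is_{i+1}s_i=s_{i+1}s_is_{i+1}$ using \eqref{alab} produces eight terms on each side, but every cross term contains a sub-product of the form $a_\bullet b_\bullet$ or $b_\bullet a_\bullet$ and hence vanishes by \eqref{iloczyny}. What survives is
\begin{equation*}
s_is_{i+1}s_i\tens a_ia_{i+1}a_i+s_{n-i}s_{n-i-1}s_{n-i}\tens b_{n-i}b_{n-i-1}b_{n-i}
\end{equation*}
on each side, with matching group elements thanks to the braid relation in $S_n$; coefficient comparison on linearly independent group elements yields the braid identities for both the $a$'s and the $b$'s. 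The commutation statements arise identically from $\alpha(s_is_j)=\alpha(s_js_i)$ for $|i-j|>1$, with a small amount of extra care needed in the single sub-case ($n$ odd, $i=(n-1)/2$) where two surviving group elements coincide.

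For \eqref{cproj2}, unitarity of $\bu$ gives $a_i^2+b_i^2=\I=a_i^2+b_{n-i}^2$, whence $b_i^2=b_{n-i}^2$ and $a_i^2=a_{n-i}^2$; the remaining equalities $a_i^2=a_{i+1}^2$ are extracted by manipulating the braid relation of \eqref{braid}, multiplying by $a_i$ on the appropriate side and invoking the commutation of $a$'s at distance $>1$. Part \eqref{cproj} is then formal: $a_i^2b_j=a_i(a_ib_j)=0=(b_ja_i)a_i=b_ja_i^2$ by \eqref{iloczyny}, so $a_i^2$ commutes with every $b_j$; commutation of $a_i^2$ with $a_k$ follows from \eqref{braid} directly when $|i-k|>1$ and from the braid relation combined with \eqref{cproj2} when $|i-k|=1$, and the same argument applies to $b_i^2$ by symmetry.
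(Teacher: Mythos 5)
Your outline of part \eqref{braid} and your overall ordering roughly match the paper, but the proof of part \eqref{iloczyny} --- which you yourself identify as the technical core --- is not actually carried out, and the mechanism you name for two of your three cases fails. Anticommutation relations such as \eqref{dalpion} and \eqref{dalpoz} cannot be ``upgraded to vanishing'' by Lemma \ref{zero_repr}: that lemma needs as input an identity of the form $a^mb=0$, and two anticommuting self-adjoint elements can perfectly well have a nonzero product (think of two Pauli matrices), so anticommutation alone supplies nothing to feed into the lemma. The engine that the paper's proof uses is different and is absent from your plan: unitarity of $\bu$ gives $a_{i+1}^2+b_{i+1}^2=\I$, whence $a_i=a_ia_{i+1}^2$ and $a_k=a_ka_{k-1}^2$ (\eqref{aii}, \eqref{akk}), because the adjacent mixed products vanish by the konik-type relations; one then telescopes, e.g.\ $a_ib_j=a_ia_{i+1}^2\dotsm{a_{j-1}^2}b_j=0$, until an adjacent zero is reached. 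Similarly, the even-$n$ center is not mere book-keeping: before any product involving $a_{p\pm1}$, $b_{p\pm1}$, $a_p=ca_1^2$, $b_p=cb_1^2$ can be treated, one must first prove $a_1^2c=ca_1^2$ and $a_{p\pm1}=a_{p\pm1}^3$ from \eqref{cbraid}; this is precisely the step you defer.

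There are two further gaps. First, part \eqref{cproj} asserts that $a_i^2$ and $b_i^2$ are central \emph{projections}, but nowhere do you prove idempotency; it follows from $a_i=a_i(a_i^2+b_i^2)=a_i^3$, i.e.\ again from unitarity combined with part \eqref{iloczyny}, not from anything in your sketch. Second, your derivation of $a_i^2=a_{i+1}^2$ ``by manipulating the braid relation, multiplying by $a_i$ and invoking commutation at distance $>1$'' does not work: commutation at distance $>1$ says nothing about adjacent indices, and self-adjoint elements with $aba=bab$ need not satisfy $a^2=b^2$ (take $b=0$). The correct and much shorter route, used in the paper, is \eqref{iaj}: for all $i,j$ one has $a_j=a_ja_i^2=a_i^2a_j$, which yields centrality, idempotency (put $j=i$) and $a_i^2=a_j^2$ in one stroke, so parts \eqref{cproj} and \eqref{cproj2} need no braid input at all. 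Finally, a misplacement in part \eqref{braid}: the coincidence of surviving group elements occurs in the braid case itself ($n=2p+1$, $i=p$, where $s_ps_{p+1}s_p=s_{p+1}s_ps_{p+1}$), giving only the weaker identity \eqref{weaker}, which the paper resolves by applying the antipode; you attach this caveat to the commutation statements and offer no resolution.
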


\begin{proof}
Ad \eqref{iloczyny}. Let us first look at the case $n=2p+1$. We have for $i\in\{1,\dotsc,n-2\}$
\begin{equation*}
a_i=a_i(a_{i+1}^2+b_{i+1}^2)=a_ia_{i+1}^2+a_ib_{i+1}^2
\end{equation*}
and since $a_ib_{i+1}=0$ (by \eqref{konik1} and \eqref{blpion}), we get
\begin{equation}\label{aii}
a_i=a_ia_{i+1}^2.
\end{equation}
Similarly for $k\in\{2,\dotsc,n-1\}$ we have
\begin{equation}\label{akk}
a_k=a_ka_{k-1}^2.
\end{equation}
Consider now the product $a_ib_j$ for some $i,j\in\{1,\dotsc,n-1\}$. We have several cases
\begin{itemize}
\item {$i=j<n-1$}: then $a_ib_j=a_ia_{i+1}^2b_i=0$ because $a_{i+1}b_i=0$,
\item {$i=j=n-1$}: then $a_ib_j=a_ia_{i-1}^2b_i=0$ because $a_{i-1}b_i=0$,
\item {$|i-j|=1$}: then $a_ib_j=0$ by either \eqref{Konik}, \eqref{Konik'}, \eqref{blpoz} or \eqref{blpion},
\item {$i<j-1$}: then $a_ib_j=a_ia_{i+1}^2b_j=\dotsm=a_ia_{i+1}^2\dotsm{a_{j-1}^2}b_j=0$ because ${a_{j-1}^2}b_j=0$,
\item {$i>j+1$}: then $a_ib_j=a_ia_{i-1}^2b_j=\dotsm=a_ia_{i-1}^2\dotsm{a_{j+1}^2}b_j=0$ because ${a_{j+1}^2}b_j=0$.
\end{itemize}
A similar reasoning (or usage of the antipode) leads to the conclusion that $b_ja_i$ is also zero. This proves \eqref{iloczyny} in the case of odd $n$.

In the case of $n=2p$ equations \eqref{aii} and \eqref{akk} are verified in an analogous manner for
\begin{equation*}
\begin{split}
i\in\bigl\{1,\dots,p-2\bigr\},&\qquad{k}\in\bigl\{2,\dotsc,p-1\bigr\}\\
&\text{and}\\
i\in\bigl\{p+1,\dots,n-2\bigr\},&\qquad{k}\in\bigl\{p+2,\dotsc,n-1\bigr\}.
\end{split}
\end{equation*}
It follows that the only products which might potentially be non-zero are
\begin{itemize}
\item[$\blacktriangleright$] product between elements from $\{a_{p-1},a_{p+1}\}$ and $\{b_{p-1},b_{p+1}\}$,
\item[$\blacktriangleright$] products of any $a_i$ with $b_p$ and any $b_i$ with $a_p$.
\end{itemize}

Before addressing these products let us quickly see that $a_1^2$ and $b_1^2$ commute with $c$. If $p>2$ then it obviously follows from the first equation of \eqref{commut}. Now we prove this for $p=2$, or in other words, $a_1=a_{p-1}$. Since $a_{p-1}^2+b_{p-1}^2=\I$, it is enough to prove that $a_{p-1}^2c=ca_{p-1}^2$. Remembering that $c^2=\I$ we compute using \eqref{aca}
\begin{equation}\label{apm}
\begin{split}
a_{p-1}^2&=a_{p-1}c^2a_{p-1}c=a_{p-1}c\,ca_{p-1}c\\
&=a_{p-1}ca_{p-1}\,ca_{p-1}=ca_{p-1}c\,ca_{p-1}=ca_{p-1}^2.
\end{split}
\end{equation}
Note that the same technique shows also that $a_{p+1}^2$ and $b_{p+1}^2$ commute with $c$, i.e.
\begin{equation}\label{apm'}
a_{p+1}^2c=ca_{p+1}^2.
\end{equation}

We now return to products of the first type described above. We have by \eqref{aca} (or \eqref{aca'}) and \eqref{apm} (or \eqref{apm'})
\begin{equation}\label{aa3}
\begin{split}
a_{p\pm1}&=c^2a_{p\pm1}c^2=c\,ca_{p\pm1}c\,c\\
&=ca_{p\pm1}c\,a_{p\pm1}c=a_{p\pm1}ca_{p\pm1}^2c\\
&=a_{p\pm1}a_{p\pm1}^2c^2=a_{p\pm1}^3.
\end{split}
\end{equation}

Now we have
\[
a_{p\pm1}=a_{p\pm1}(a_{p\pm1}^2+b_{p\pm1}^2)=a_{p\pm1}^2+a_{p\pm1}b_{p\pm1}^2.
\]
By \eqref{aa3}, $a_{p\pm1}b_{p\pm1}^2=0$, or taking adjoints of both sides, $b_{p\pm1}^2a_{p\pm1}=0$. Therefore, by Lemma \ref{zero_repr} we have
\begin{equation}\label{bazer}
b_{p\pm1}a_{p\pm1}=0.
\end{equation}
Starting with
\[
a_{p\pm1}=(a_{p\pm1}^2+b_{p\pm1}^2)a_{p\pm1}
\]
we obtain 
\begin{equation}\label{abzer}
a_{p\pm1}b_{p\pm1}=0.
\end{equation}
Applying the antipode to \eqref{bazer} and \eqref{abzer} we find that
\begin{equation*}
a_{p\pm1}b_{p\mp1}=0\qquad\text{and}\qquad{b_{p\mp1}a_{p\pm1}=0}.
\end{equation*}

Having established that
\begin{equation*}
a_1^2c-ca_1^2=b_1^2c-cb_1^2=0
\end{equation*}
we easily see that not only
\begin{equation*}
a_pb_{r}=ca_1^2b_{r}=0,\qquad{1\leq{r}\leq{n-1}},
\end{equation*}
but also
\begin{equation*}
b_{r}a_p=b_{r}ca_1^2=b_ra_1^2c=0,\qquad{1\leq{r}\leq{n-1}}.
\end{equation*}
This finishes the proof of \eqref{iloczyny}.

Ad \eqref{braid}. Take first  $i\in\{1,\dotsc,n-2\}$ such that $i,i+1\neq\tfrac{n}{2}$ (if this conditions is applicable). The formula $\alpha(s_is_{i+1}s_i)=\alpha(s_{i+1}s_is_{i+1})$ leads to
\begin{equation*}
\begin{split}
s_{i}&s_{i+1}s_{i}\tens{a_{i}a_{i+1}a_{i}}
+s_{i}s_{i+1}s_{n-i}\tens{a_{i}a_{i+1}b_{n-i}}\\
&\quad+s_{i}s_{n-i-1}s_{i}\tens{a_{i}b_{n-i-1}a_{i}}
+s_{i}s_{n-i-1}s_{n-i}\tens{a_{i}b_{n-i-1}b_{n-i}}\\
&\quad+s_{n-i}s_{i+1}s_{i}\tens{b_{n-i}a_{i+1}a_{i}}
+s_{n-i}s_{i+1}s_{n-i}\tens{b_{n-i}a_{i+1}b_{n-i}}\\
&\quad+s_{n-i}s_{n-i-1}s_{i}\tens{b_{n-i}b_{n-i-1}a_{i}}
+s_{n-i}s_{n-i-1}s_{n-i}\tens{b_{n-i}b_{n-i-1}b_{n-i}}\\
=
\,&\,s_{i+1}s_{i}s_{i+1}\tens{a_{i+1}a_{i}a_{i+1}}
+s_{i+1}s_{i}s_{n-i-1}\tens{a_{i+1}a_{i}b_{n-i-1}}\\
&\quad+s_{i+1}s_{n-i}s_{i+1}\tens{a_{i+1}b_{n-i}a_{i+1}}
+s_{i+1}s_{n-i}s_{n-i-1}\tens{a_{i+1}b_{n-i}b_{n-i-1}}\\
&\quad+s_{n-i-1}s_{i}s_{i+1}\tens{b_{n-i-1}a_{i}a_{i+1}}
+s_{n-i-1}s_{i}s_{n-i-1}\tens{b_{n-i-1}a_{i}b_{n-i-1}}\\
&\quad+s_{n-i-1}s_{n-i}s_{i}\tens{b_{n-i-1}b_{n-i}a_{i}}
+s_{n-i-1}s_{n-i}s_{n-i-1}\tens{b_{n-i-1}b_{n-i}b_{n-i-1}}.
\end{split}
\end{equation*}
Since by \eqref{iloczyny} all products between $a_i$'s and $b_j$'s are zero, all the middle terms on both sides are equal to $0$. Thus we obtain
\begin{equation*}
\begin{split}
s_{i}s_{i+1}&s_{i}\tens{a_{i}a_{i+1}a_{i}}
+s_{n-i}s_{n-i-1}s_{n-i}\tens{b_{n-i}b_{n-i-1}b_{n-i}}\\
&=
s_{i+1}s_{i}s_{i+1}\tens{a_{i+1}a_{i}a_{i+1}}
+s_{n-i-1}s_{n-i}s_{n-i-1}\tens{b_{n-i-1}b_{n-i}b_{n-i-1}}.
\end{split}
\end{equation*}
We have explicitly ruled out the possibility $i=\frac{n}{2}$, so $s_{i}s_{i+1}s_{i}\neq{s_{n-i}s_{n-i-1}s_{n-i}}$
in all cases except when $i+1=n-i$, which means that $n=2p+1$ and $i=p$. Therefore, by linear independence of group elements inside $\CC[S_n]$, we obtain
\begin{equation*}
a_{i}a_{i+1}a_{i}=a_{i+1}a_{i}a_{i+1}\qquad\text{and}\qquad
b_{n-i}b_{n-i-1}b_{n-i}=b_{n-i-1}b_{n-i}b_{n-i-1}.
\end{equation*}
when $i\neq\tfrac{n-1}{2}$.

In the case $n=2p+1$, $i=p$ we obtain a seemingly weaker relation
\begin{equation}\label{weaker}
a_{p}a_{p+1}a_{p}+b_{p+1}b_{p}b_{p+1}=a_{p+1}a_{p}a_{p+1}+b_{p}b_{p+1}b_{p},
\end{equation}
but applying to both sides the antipode yields
\begin{equation}\label{weaker'}
a_{p}a_{p+1}a_{p}+b_pb_{p+1}b_p=a_{p+1}a_{p}a_{p+1}+b_{p+1}b_pb_{p+1}.
\end{equation}
Comparing \eqref{weaker} and \eqref{weaker'} we obtain
\begin{equation*}
a_{p}a_{p+1}a_{p}=a_{p+1}a_{p}a_{p+1}\qquad\text{and}\qquad
b_{p+1}b_{p}b_{p+1}=b_{p}b_{p+1}b_{p}.
\end{equation*}

The last case to consider is the case $n=2p$ and $i=p-1$ or $i=p$. This we easily obtain from \eqref{cbraid} by putting in $c=a_p+b_p$ and using the fact that products between $a_i$'s and $b_j$'s are zero.

The commutativity of $a_i$ and $a_j$ (as well as $b_i$ with $b_j$) for $|i-j|>1$ follows from \eqref{commut}.

Ad \eqref{cproj} and \eqref{cproj2}. Take any $i,j\in\{1,\dotsc,n-1\}$. We have
\begin{equation}\label{iaj}
a_j=a_j(a_i^2+b_i^2)=a_ja_i^2\qquad\text{and}\qquad{a_j=(a_i^2+b_i^2)a_j=a_i^2a_j.}
\end{equation}
Since $a_i$ clearly commutes with all $b_j$'s we see that $a_i^2$ is central for all $i$. Since $b_i^2=\I-a_i^2$, the elements $b_i^2$ are also central. Putting $j=i$ in \eqref{iaj} we get $a_i=a_i^3$, so $a_i^2$ is a projection (and thus so is $b_i^2$). This proves \eqref{cproj}.

Using \eqref{iaj} once for $(i,j)$ and once for $(j,i)$ we find that
\[
a_j^2=a_j^2a_i^2=a_i^2
\]
which finishes the proof of \eqref{cproj2}.
\end{proof}

Equipped with the conclusion of Theorem \ref{glow} we easily see that the subalgebra of $\cA$ generated by the entries of $\bu$ is a quotient of the direct sum $\CC[S_n]\oplus\CC[S_n]$. Let $\balpha\colon\CC[S_n]\to\CC[S_n]\tens\bigl(\CC[S_n]\tens\CC[S_n]\bigr)$ be given by
\begin{equation}\label{Defbal}
\balpha(s_i)=s_i\tens\sigma_i+s_{n-i}\tens\tau_{n-i},
\end{equation}
where $\sigma_1,\dots,\sigma_{n-1}$ are the generators $s_1,\dotsc,s_{n-1}$ in the first copy of $\CC[S_n]$ inside $\CC[S_n]\oplus\CC[S_n]$ and by $\tau_1,\dots,\tau_{n-1}$ are the same generators in the second copy. Then $\balpha$ is a coaction of
\begin{equation}\label{KK}
\KK=\bigl(\CC[S_n]\oplus\CC[S_n],\Delta\bigr)
\end{equation}
with
\begin{equation*}
\begin{split}
\Delta(\sigma_i)&=\sigma_i\tens\sigma_i+\tau_i\tens\tau_{n-i},\\
\Delta(\tau_i)&=\sigma_i\tens\tau_i+\tau_i\tens\sigma_{n-i}
\end{split}
\end{equation*}
on $\CC[S_n]$. We will show in Subsection \ref{isom} that this coaction is isometric in the sense described in Subsection \ref{SpTr}. Moreover, by Theorem \ref{glow}, for any isometric $\alpha$ of $\GG=(A,\Delta_A)$ there is a unique $\Phi\colon\CC[S_n]\oplus\CC[S_n]\to{A}$ such that
\begin{equation*}
\Phi(a_i)=\sigma_i\quad\text{and}\quad\Phi(b_i)=\tau_i,\qquad{1}\leq{i}\leq{n-1}.
\end{equation*}
Finally $\Phi$ is easily seen to be a quantum group morphism and to satisfy
\begin{equation}\label{Phial}
\alpha=(\id\tens\Phi)\comp\balpha.
\end{equation}

It follows that $\KK$ is the universal compact quantum group coacting isometrically on the spectral triple $\bigl(\CC[S_n],\ell^2(S_n),D\bigr)$.

In the next section we will describe a different way to obtain the quantum group 
\begin{equation*}
\QISO\bigl(\CC[S_n],\ell^2(S_n),D\bigr)=\KK=\bigl(\CC[S_n]\oplus\CC[S_n],\Delta\bigr)
\end{equation*}
from $\CC[S_n]$.

\subsection{The coaction \texorpdfstring{$\balpha$}{a}}\label{isom}

In this subsection we quickly verify that the coaction $\balpha$ of \eqref{KK} defined in \eqref{Defbal} is isometric. Let $\theta$ be the automorphism of $\CC[S_n]$ mapping $s_i$ to $s_{n-1}$ and let $\beta$ be the automorphism of $\CC[S_n]\oplus\CC[S_n]$ given by
\begin{equation*}
\beta(\sigma_i)=\sigma_{n-i},\qquad\beta(\tau_j)=\tau_{n-j}
\end{equation*}
(so $\beta$ is the automorphism $\theta$ acting in both copies of $\CC[S_n]$ simultaneously). Moreover let $\psi$ be the algebra homomorphism
\begin{equation*}
\psi\colon\CC[S_n]\to\CC[S_n]\tens\bigl(\CC[S_n]\oplus\CC[S_n]\bigr)
\end{equation*}
given on generators by
\begin{equation*}
\psi(s_i)=s_i\tens\sigma_i.
\end{equation*}
Then
\begin{equation}\label{blpsi}
\balpha(x)=\psi(x)+(\theta\tens\beta)\bigl(\psi(x)\bigr)
\end{equation}
for all $x\in\CC[S_n]$. If $\Hat{D}$ is the Dirac operator of the spectral triple $\bigl(\CC[S_n],\ell^2(S_n),D\bigr)$ considered as a map $\CC[S_n]\to\CC[S_n]$ (cf.~Subsection \ref{SpTr}) then we easily see that
\begin{equation*}
\psi\comp\Hat{D}=(\Hat{D}\tens\I)\comp\psi.
\end{equation*}
Also, we clearly have $\Hat{D}\comp\theta=\theta\comp\Hat{D}$, so by \eqref{blpsi} we get
\begin{equation*}
\balpha\comp\Hat{D}=(\Hat{D}\tens\I)\comp\balpha.
\end{equation*}

Recall that in order to derive conditions on the matrix elements of $\bu$ in Section \ref{qiso} we only assumed that the considered coaction preserves the subspace of elements of length $1$ (Eq.~\eqref{alsj}) and the canonical trace $\btau$ is preserved under $\alpha$ on elements of length $0$ and $2$ (eq.~\eqref{deltakl}). Nevertheless, the conclusion was that $\alpha$ is related to $\balpha$ via formula \eqref{Phial}. Since $\balpha$ is isometric, it follows that the weaker conditions on $\alpha$ assumed in the beginning of Section \ref{qiso} actually imply the stronger condition of being an isometric coaction.
% This phenomenon seems to be  of universal nature (\cite{}...), however a general result in this direction is not know to the authors.

\section{Doubling procedure}\label{doubling}

In this section we will describe a simple procedure which out of a regular multiplier Hopf algebra with invariant functionals endowed with an order two automorphism produces a new regular multiplier Hopf algebra with invariant functionals. This will be called the \emph{doubling procedure}. The name will be justified by the example presented in Subsection \ref{exCCG}.

We refer to the standard sources \cite{mha,afgd} for the theory of multiplier Hopf algebras and their duality. For an algebra $\cA$ (always over $\CC$) with non-degenerate product the symbol $\cM(\cA)$ will denote the \emph{algebraic multiplier algebra} of $\cA$ (cf.~\cite{mha}).

\subsection{The procedure}\label{doublB}

Let $(\cA,\Delta)$ be a regular multiplier Hopf algebra with invariant functionals (cf.~\cite{afgd} for the definition of such objects) and let $\theta\colon\cA\to\cA$ be an automorphism of $(\cA,\Delta)$, i.e.~an automorphism of $\cA$ such that
\begin{equation*}
(\theta\tens\theta)\comp\Delta=\Delta\comp\theta
\end{equation*}
(while it is quite easy to show that $\theta(\cA)=\cA$, on the left hand side one needs to consider the unique extension of $(\theta\tens\theta)$ to a homomorphism $\cM(\cA\tens\cA)\to\cM(\cA\tens\cA)$). The first important fact is that the adjoint of the map $\theta$ on the linear dual of $\cA$ restricts to a map $\Hat{\cA}\to\Hat{\cA}$. Indeed, this follows from the fact that if $\ph$ is a non-zero left invariant functional on $\cA$ then so is, $\ph\comp\theta$. By the uniqueness of invariant functionals $\ph\comp\theta=c\ph$ for some non zero constant $c$. It follows that for any $a\in\cA$
\begin{equation*}
\ph(a\,\cdot\:\!)\comp\theta=\ph\bigl(c\theta^{-1}(a)\,\cdot\:\!\bigr).
\end{equation*}
This way we define the \emph{dual} of $\theta$ which is in fact an automorphism of $(\Hat{\cA},\Hat{\Delta})$. We will denote this automorphism by $\Hat{\theta}$.

Now let us consider the special case when $\theta$ is an order-two automorphism. Let $\cB$ be the crossed product $\Hat{\cA}\rtimes_{\Hat{\theta}}\ZZ_2$. The algebra $\cB$ can be identified with the algebra of matrices
\begin{equation*}
\left\{\begin{bmatrix}x&y\\\Hat{\theta}(y)&\Hat{\theta}(x)\end{bmatrix}\st{x,y}\in\Hat{\cA}\,\right\}.
\end{equation*}
(see e.g.~\cite[Part $\mathrm{I}$, Example 2.11]{cont} and \cite[Remark before Proposition 7.5]{kvd}). Note that the mapping
\begin{equation*}
\imath\colon\Hat{\cA}\ni{x}\longmapsto\begin{bmatrix}x&0\\0&\Hat{\theta}(x)\end{bmatrix}\in\cB
\end{equation*}
is a non-degenerate homomorphism.\footnote{For this one should use the \emph{local units} of $\cA$, cf.~\cite[Proposition 2.6]{dvdz}.}  Moreover there is a multiplier $U$ of $\cB$ such that $U^2=\I$ and
\begin{equation*}
U\imath(x)U=\imath\bigl(\Hat{\theta}(x)\bigr)
\end{equation*}
for all $x\in\Hat{\cA}$. Indeed, $U=\bigl(\begin{smallmatrix}0&\I\\\I&0\end{smallmatrix}\bigr)$. If $\cC$ is another algebra with non-degenerate product equipped with a non-degenerate mapping $\jmath\colon\Hat{\cA}\to\cM(\cC)$ and an element $V\in\cM(\cC)$ such that $V^2=\I$ and
\begin{equation*}
V\jmath(x)V=\jmath\bigl(\Hat{\theta}(x)\bigr)
\end{equation*}
for all $x\in\Hat{\cA}$ then there exists a unique non-degenerate map $\Lambda\colon\cB\to\cM(\cC)$ such that
\begin{equation*}
\Lambda\bigl(\imath(x)\bigr)=\jmath(x)
\end{equation*}
for all $x\in\Hat{\cA}$ and $\Lambda(U)=V$. Indeed, uniqueness of $\Lambda$ is immediate, since any element of $\cB$ is a sum of elements of the form $\imath(x)$ with $x\in\Hat{\cA}$ and $\imath(y)U$ with $y\in\Hat{\cA}$. On the other hand if $\jmath$ and $V$ are as above then the algebra generated inside $\cM(\cC)$ by the image of $\jmath$ and by $V$ is clearly an image of the matrix algebra $\cB$. Non-degeneracy of $\Lambda$ follows from the non-degeneracy of $\jmath$ (the image of $\Lambda$ contains the image of $\jmath$).

Now consider $\cC=\cB\tens\cB$ with $\jmath\colon\Hat{\cA}\ni{x}\mapsto(\imath\tens\imath)\Hat{\Delta}(x)$ and $V=U\tens{U}$. Let the corresponding unique map $\cB\to\cM(\cB\tens\cB)$ be denoted by $\Delta_{\cB}$. It is straightforward that $\Delta_{\cB}$ is a comultiplication which makes $\cB$ into a regular multiplier Hopf algebra. The counit and antipode of $\cB$ are given by
\begin{equation*}
\epsilon_{\cB}\left(
\begin{bmatrix}x&y\\\Hat{\theta}(y)&\Hat{\theta}(x)\end{bmatrix}\right)=\Hat{\epsilon}(x),\qquad
S_{\cB}\left(
\begin{bmatrix}x&y\\\Hat{\theta}(y)&\Hat{\theta}(x)\end{bmatrix}\right)=
\begin{bmatrix}\Hat{S}(x)&\Hat{S}(y)\\
\Hat{\theta}\bigl(\Hat{S}(y)\bigr)&\Hat{\theta}\bigl(\Hat{S}(x)\bigr)\end{bmatrix},
\end{equation*}
where $\Hat{\epsilon}$ and $\Hat{S}$ are counit and antipode of $(\Hat{\cA},\Hat{\Delta})$.\footnote{To see that $\epsilon_\cB$ and $S_\cB$ satisfy the desired properties one must use the simple fact that $\Hat{\epsilon}\comp\Hat{\theta}=\Hat{\epsilon}$ and $\Hat{S}\comp\Hat{\theta}=\Hat{\theta}\comp\Hat{S}=\Hat{S}$.
}

The regular multiplier Hopf algebra $(\cB,\Delta_{\cB})$ is easily seen to admit invariant functionals: if $\Hat{\ph}$ and $\Hat{\psi}$ are left and right invariant functionals on $\Hat{\cA}$ then
\begin{equation*}
\ph_\cB\left(
\begin{bmatrix}x&y\\\Hat{\theta}(y)&\Hat{\theta}(x)\end{bmatrix}\right)=\Hat{\ph}(x)\quad\text{and}\quad
\psi_\cB\left(\begin{bmatrix}x&y\\\Hat{\theta}(y)&\Hat{\theta}(x)\end{bmatrix}\right)=\Hat{\psi}(x)
\end{equation*}
define invariant functionals on $(\cB,\Delta_\cB)$.

The \emph{doubling} $(\widetilde{\cA},\widetilde{\Delta})$ of $(\cA,\Delta,\theta)$ is by definition the multiplier Hopf algebra dual to $(\cB,\Delta_\cB)$. One can easily see that $(\cA,\Delta)$ is a \emph{quotient} of $(\widetilde{\cA},\widetilde{\Delta})$. In fact, the construction of the doubling is fully analogous to the operation of semidirect product by $\ZZ_2$ in group theory. In particular it does not lead out of the class of commutative multiplier Hopf algebras.

\subsection{Example}\label{exCCG}

Now let us apply the construction discussed above to the case of a group algebra $\CC[G]$ of a discrete group $G$ with an order two automorphism $\theta$. The dual $\Hat{\cA}$ of $\cA$ is then the algebra of finitely supported functions on $G$ with comultiplication $\Hat{\Delta}$ coming from group multiplication:
\begin{equation*}
\Hat{\Delta}(f)(x,y)=f(xy)
\end{equation*}
for $f\in\Hat{\cA}$ and $x,y\in{G}$. The delta functions $\{\delta_x\}_{x\in{G}}$ for a basis of $\Hat{\cA}$ and
\begin{equation*}
\Hat{\Delta}(\delta_x)=\sum_{ab=x}\delta_a\tens\delta_b
\end{equation*}
where the possibly infinite sum clearly defines a multiplier of $\Hat{\cA}\tens\Hat{\cA}$. Note that the multiplication in $\Hat{\cA}$ is the usual pointwise multiplication of functions on $G$, so $\delta_x$ multiplied by $\delta_y$ is non-zero if and only if $x=y$ and then the product is $\delta_x$.

The functional
\begin{equation*}
\Hat{\cA}\ni{f}\longmapsto\sum_{x\in{G}}f(x)
\end{equation*}
is both left and right invariant. The crossed product algebra $\cB$ defined in Subsection \ref{doublB} is isomorphic as an algebra with $\Hat{\cA}\oplus\Hat{\cA}$ with multiplication
\begin{equation*}
(f,g)(u,v)=\bigl(fu+g\Hat{\theta}(v),fv+g\Hat{\theta}(u)\bigr),
\end{equation*}
where $\Hat{\theta}(u)=u\comp\theta$ for any $u\in\Hat{\cA}$. The inclusion $\imath\colon\Hat{\cA}\hookrightarrow\cM(\cB)$ is given by $f\mapsto(f,0)$, while the multiplier $U$ is $(0,\I)$. Suppressing $\imath$ we can treat $\delta_x$ and $\delta_yU$ as an elements $(\delta_x,0)$ and $(0,\delta_y)$ of $\cB$. With this notation we have for any $x,y\in{G}$
\begin{equation*}
\Delta_\cB(\delta_x)=\sum_{ab=x}\delta_a\tens\delta_b,\qquad
\Delta_\cB(\delta_yU)=\sum_{ab=y}\delta_aU\tens\delta_bU.
\end{equation*}
The set $\{\delta_x\}_{x\in{G}}\cup\{\delta_yU\}_{y\in{G}}$ is a basis of $\cB$.

Let $h$ be the left and right invariant functional on $\cB$
\begin{equation*}
h\bigl((f,g)\bigr)=\sum_{x\in{G}}f(x).
\end{equation*}
Now consider the functionals $\{\xi_x\}_{x\in{G}}$ and $\{\eta_y\}_{y\in{G}}$ defined by
\begin{equation*}
\xi_x=h(\,\cdot\,\delta_x),\qquad\text{and}\qquad\eta_y=h(\,\cdot\,U\delta_y).
\end{equation*}
By construction they belong to $\widetilde{\cA}=\Hat{\cB}$ and one immediately sees that they span this space. Moreover it is easy to see that
\begin{equation*}
\xi_x(\delta_z)=
\begin{cases}
1&z=x,\\0&\text{otherwise},
\end{cases}
\qquad
\eta_y(\delta_zU)=
\begin{cases}
1&z=y,\\0&\text{otherwise}
\end{cases}
\end{equation*}
and
\begin{equation*}
\xi_x(U\delta_z)=0=\eta_y(\delta_z)
\end{equation*}
for all $x,y,z\in{G}$.

Multiplication in $\widetilde{\cA}$ is dual to comultiplication in $\cB$. It follows from a simple computation that
\begin{equation*}
\xi_x\xi_y=\xi_{xy},\quad\xi_x\eta_y=\eta_y\xi_x=0,\quad
\eta_x\eta_y=\eta_{xy}.
\end{equation*}
Therefore $\widetilde{\cA}$ is, as an algebra, isomorphic to the direct sum of two copies of the group algebra of $G$:
\begin{equation*}
\widetilde{\cA}\cong\CC[G]\oplus\CC[G]
\end{equation*}
with $\{\xi_x\}_{x\in{G}}$ corresponding to the canonical basis of the first copy of $\CC[G]$ and $\{\eta_y\}_{y\in{G}}$ corresponding to the canonical basis of the second copy. Let us note that, in particular, $\widetilde{\cA}$ is unital.

We shall now describe the comultiplication on $\widetilde{\cA}$. This is very simple, since the remark after \cite[Definition 4.4]{afgd} says that for any $\xi\in\widetilde{\cA}=\Hat{\cB}$ we have
\begin{equation*}
\widetilde{\Delta}(\xi)\,(\Xi\tens\Theta)=\xi(\Xi\Theta).
\end{equation*}
for all $\Xi,\Theta\in\cB$. Since
\begin{equation*}
\begin{split}
&\widetilde{\Delta}(\xi_x)(\delta_y\tens\delta_z)=h(\delta_y\delta_z\delta_x)=
\begin{cases}
1&x=y=z,\\
0&\text{otherwise},
\end{cases}\\
&\widetilde{\Delta}(\xi_x)(\delta_y\tens\delta_zU)=h(\delta_y\delta_zU\delta_x)=0,\\
&\widetilde{\Delta}(\xi_x)(\delta_yU\tens\delta_z)=h(U\delta_y\delta_z\delta_x)=0,\\
&\widetilde{\Delta}(\xi_x)(\delta_yU\tens\delta_zU)=h(\delta_yU\delta_zU\delta_x)=
h(\delta_y\delta_{\theta(z)}\delta_x)=
\begin{cases}
1&x=y=\theta(z),\\
0&\text{otherwise}
\end{cases}
\end{split}
\end{equation*}
and
\begin{equation*}
\begin{split}
&\widetilde{\Delta}(\eta_x)(\delta_y\tens\delta_z)=h(\delta_y\delta_zU\delta_x)=0,\\
&\widetilde{\Delta}(\eta_x)(\delta_y\tens\delta_zU)=h(\delta_y\delta_z\delta_x)=
\begin{cases}
1&x=y=z,\\
0&\text{otherwise},
\end{cases}\\
&\widetilde{\Delta}(\eta_x)(\delta_yU\tens\delta_z)=h(\delta_yU\delta_zU\delta_x)=
h(\delta_y\delta_{\theta(z)}\delta_x)=
\begin{cases}
1&x=y=\theta(z),\\
0&\text{otherwise}
\end{cases}\\
&\widetilde{\Delta}(\eta_x)(\delta_yU\tens\delta_zU)=h(\delta_yU\delta_z\delta_x)=0
\end{split}
\end{equation*}
we have
\begin{equation*}
\begin{split}
\widetilde{\Delta}(\xi_x)&=\xi_x\tens\xi_x+\eta_x\tens\eta_{\theta(x)},\\
\widetilde{\Delta}(\eta_x)&=\xi_x\tens\eta_x+\eta_x\tens\xi_{\theta(x)}.
\end{split}
\end{equation*}
If we still denote by $\theta$ the canonical extension of $\theta$ to an automorphism of $\widetilde{\cA}=\CC[G]\oplus\CC[G]$ then we can write
\begin{equation}\label{DelTil}
\begin{split}
\widetilde{\Delta}(\xi_x)&=\xi_x\tens\xi_x+\eta_x\tens\theta(\eta_x),\\
\widetilde{\Delta}(\eta_x)&=\xi_x\tens\eta_x+\eta_x\tens\theta(\xi_x).
\end{split}
\end{equation}

Let us specify the situation further and take $G=S_n$. As in previous sections let us denote by $\sigma_1,\dots,\sigma_{n-1}$ the transpositions $s_1,\dotsc,s_{n-1}$ in the first copy of $\CC[S_n]$ inside $\CC[S_n]\oplus\CC[S_n]$ and by $\tau_1,\dots,\tau_{n-1}$ the same transpositions in the second copy. Let $\theta$ be the automorphism of $S_n$ given by conjugation with the unique word of maximal length (on the considered generators). Then $\theta(\sigma_i)=\sigma_{n-i}$ and $\theta(\tau_i)=\tau_{n-i}$ for all $i$. In particular the formula \eqref{DelTil} gives the comultiplication $\Delta$ on $\CC[S_n]\oplus\CC[S_n]$ obtained via the doubling procedure for $\theta$ as
\begin{equation*}
\begin{split}
\Delta(\sigma_i)&=\sigma_i\tens\sigma_i+\tau_i\tens\tau_{n-i},\\
\Delta(\tau_i)&=\sigma_i\tens\tau_i+\tau_i\tens\sigma_{n-i}.
\end{split}
\end{equation*}
This means that the matrix
\begin{equation*}
\begin{bmatrix}
\sigma_1  &          &       &       &            &\tau_1      \\
          &\sigma_2  &       &       &\tau_2      &            \\
          &          &\ddots &\iddots&            &            \\
          &          &\iddots&\ddots &            &            \\
          &\tau_{n-2}&       &       &\sigma_{n-2}&            \\
\tau_{n-1}&          &       &       &            &\sigma_{n-1}
\end{bmatrix}
\end{equation*}
(in case $n$ is even, the middle element of the matrix is $\sigma_{\frac{n}{2}}+\tau_{\frac{n}{2}}$)
is a corepresentation of the finite quantum group $\KK=\bigl(\CC[S_n]\oplus\CC[S_n],\Delta\bigr)$. This completes the proof of Theorem \ref{main}.

\section{The quantum isometry groups for \texorpdfstring{$S_3$}{S3} with different sets of generators}
\label{rank}

In \cite[Section 4]{bs} quantum isometry groups of spectral triples on $S_3$ were discussed. The two spectral triples were related to two different sets of generators used to define the length function on $S_3$ and the corresponding Dirac operator.

In order to formulate the precise result of the work of Bhowmick and Skalski let us denote by $A$ the \cst-algebra $\CC[S_3]\oplus\CC[S_3]$ and let $\sigma_1,\sigma_2$ and $\tau_1,\tau_2$ denote the transpositions $s_1$ and $s_2$ in the first and second copy of $\CC[S_3]$ inside $A$ respectively (just as in Sections \ref{intro}--\ref{doubling}).

Then we have
\begin{enumerate}
\item The quantum isometry group of $S_3$ with the generating set $\{s_1,s_2\}$ is
\begin{equation*}
\KK_1=(A,\Delta_1),
\end{equation*}
where\footnote{Please note that the formulas for comultiplication in this and the next case hinted upon in \cite{bs} are slightly incorrect.}
\begin{equation*}
\begin{aligned}
\Delta_1(\sigma_1)&=\sigma_1\tens\sigma_1+\tau_1\tens\tau_2,&
\Delta_1(\tau_1)&=\tau_1\tens\sigma_2+\sigma_1\tens\tau_1,\\
\Delta_1(\sigma_2)&=\sigma_2\tens\sigma_2+\tau_2\tens\tau_1,&
\Delta_1(\tau_2)&=\tau_2\tens\sigma_1+\sigma_2\tens\tau_2.
\end{aligned}
\end{equation*}
The coaction of $\KK_1$ on $\CC[S_3]$ is given on generators by
\begin{equation*}
\begin{split}
\balpha_1(s_1)&=s_1\tens\sigma_1+s_2\tens\tau_2,\\
\balpha_1(s_2)&=s_1\tens\sigma_2+s_1\tens\tau_1.
\end{split}
\end{equation*}
(This is in full analogy to the cases $n>3$ discussed in previous sections.)
\item The quantum isometry group of $S_3$ with the generating set $\{s_1,s_1s_2,s_2s_1\}$ is
\begin{equation*}
\KK_2=(A,\Delta_2),
\end{equation*}
where
\begin{equation*}
\begin{aligned}
\Delta_2(\sigma_1)&=\sigma_1\tens\sigma_1+\tau_1\tens\tau_1,&
\Delta_2(\tau_1)&=\tau_1\tens\sigma_1+\sigma_1\tens\tau_1,\\
\Delta_2(\sigma_2)&=\sigma_2\tens\sigma_2+\tau_1\tau_2\tau_1\tens\tau_2,&
\Delta_2(\tau_2)&=\tau_2\tens\sigma_2+\sigma_1\sigma_2\sigma_1\tens\tau_2.
\end{aligned}
\end{equation*}
The coaction of $\KK_2$ on $\CC[S_3]$ is given on generators by
\begin{equation*}
\begin{split}
\balpha_2(s_1)&=s_1\tens(\sigma_1+\tau_1),\\
\balpha_2(s_2)&=s_2\tens\sigma_2+s_1s_2s_1\tens\tau_2
\end{split}
\end{equation*}
(the elements $s_2$ and $s_1s_2s_1$ are, in this case, of length two).
\end{enumerate}

In both cases we obtain a non-commutative and non-cocommutative Hopf algebra of dimension 12. By the results of \cite{F} there are only two such Hopf algebras up to isomorphism. Our aim in this section is to show that $\KK_1$ is not isomorphic to $\KK_2$. This, in particular, shows that the Hopf algebras constructed by Fukuda in \cite{F} have a (non-commutative) geometric origin. We will prove that $\KK_1$ is not isomorphic to $\KK_2$ by assuming that indeed there is an automorphism $\ph$ of $A$ such that
\begin{equation}\label{Dph}
(\ph\tens\ph)\comp\Delta_1=\Delta_2\comp\ph
\end{equation}
and then showing that this leads to a contradiction.

Let $\mu$ be the multiplication map $A\tens{A}\to{A}$ and define two linear maps $T_1,T_2\colon{A}\to{A}$ as
\begin{equation}\label{Ti}
T_i=\mu\comp\Delta_i,\qquad{i=1,2}.
\end{equation}
Denote by $e_\sigma$ and $e_\tau$ the neutral elements of $S_3$ sitting in the first and second copy of $\CC[S_3]$ inside $A$. The set
\begin{equation}\label{bas}
\bigl\{e_\sigma,\sigma_1,\sigma_2,\sigma_1\sigma_2,\sigma_2\sigma_1,\sigma_1\sigma_2\sigma_1,
e_\tau,\tau_1,\tau_2,\tau_1\tau_2,\tau_2\tau_1,\tau_1\tau_2\tau_1\bigr\}
\end{equation}
is a basis of $A$. We have
\begin{equation*}
\begin{aligned}
T_1(e_\sigma)&=e_\sigma+e_\tau,&\quad&&T_2(e_\sigma)&=e_\sigma+e_\tau,\\
T_1(\sigma_1)&=e_\sigma+\tau_1\tau_2,&\quad&&T_2(\sigma_1)&=e_\sigma+\tau_1\tau_2,\\
T_1(\sigma_2)&=e_\sigma+\tau_2\tau_1,&\quad&&T_2(\sigma_2)&=e_\sigma+\tau_2\tau_1,\\
T_1(\sigma_1\sigma_2)&=\sigma_2\sigma_1+e_\tau,&\quad&&T_2(\sigma_1\sigma_2)&=\sigma_2\sigma_1+e_\tau,\\
T_1(\sigma_2\sigma_1)&=\sigma_1\sigma_2+e_\tau,&\quad&&T_2(\sigma_2\sigma_1)&=\sigma_1\sigma_2+e_\tau,\\
T_1(\sigma_1\sigma_2\sigma_1)&=e_\sigma+e_\tau,&\quad&&T_2(\sigma_1\sigma_2\sigma_1)&=e_\sigma+\tau_1\tau_2
\end{aligned}
\end{equation*}
and the values of $T_1$ and $T_2$ on the remaining vectors of the basis \eqref{bas} are zero.

As the map $\ph$ is multiplicative (i.e.~$\ph\comp\mu=\mu\comp(\ph\tens\ph)$), from \eqref{Dph} and \eqref{Ti} we infer that
\begin{equation}\label{Tph}
\ph\comp{T_1}=T_2\comp\ph.
\end{equation}

Since $e_\sigma+e_\tau=\I$ (the unit of $A$) we have $\ph\bigl(T_1(\sigma_1\sigma_2\sigma_1)\bigr)=\I$ and by \eqref{Tph} $T_2\bigl(\ph(\sigma_1\sigma_2\sigma_1)\bigr)=\I$. This means that $\ph(\sigma_1\sigma_2\sigma_1)$ differs from $e_\sigma$ by an element of $\ker{T_2}$. It is easy to check that
\begin{equation*}
\ker{T_2}=\lin\bigl\{
\sigma_2-\sigma_1\sigma_2\sigma_1,e_\tau,\tau_1,\tau_2,\tau_1\tau_2,\tau_2,\tau_1,\tau_1\tau_2\tau_1\bigr\}.
\end{equation*}
Therefore
\begin{equation}\label{no1}
\ph(\sigma_1\sigma_2\sigma_1)=e_\sigma+\lambda(\sigma_2-\sigma_1\sigma_2\sigma_1)+
\comb(e_\tau,\tau_1,\tau_2,\tau_1\tau_2,\tau_2,\tau_1,\tau_1\tau_2\tau_1),
\end{equation}
where $\comb(\dotsm)$ represents some linear combination of elements in parentheses. Furthermore
\begin{equation*}
(\sigma_1\sigma_2\sigma_1)^2=e_\sigma,
\end{equation*}
so that
\begin{equation}\label{no3}
\ph(\sigma_1\sigma_2\sigma_1)^2=\ph\bigl((\sigma_1\sigma_2\sigma_1)^2\bigr)=\ph(e_\sigma).
\end{equation}
But $T_1(e_\sigma)=\I$ as well, so $T_2\bigl(\ph(e_\sigma)\bigr)=\ph\bigl(T_1(e_\sigma)\bigr)=\I$. This means that also
\begin{equation}\label{no2}
\ph(e_\sigma)=e_\sigma+\lambda'(\sigma_2-\sigma_1\sigma_2\sigma_1)+
\comb'(e_\tau,\tau_1,\tau_2,\tau_1\tau_2,\tau_2,\tau_1,\tau_1\tau_2\tau_1).
\end{equation}
Inserting \eqref{no1} and \eqref{no2} into \eqref{no3} we get
\begin{equation*}
\begin{split}
e_\sigma+2\lambda^2e_\sigma-\lambda^2(\sigma_1\sigma_2+\sigma_2\sigma_1)&+\lambda(\sigma_1-\sigma_1\sigma_2\sigma_1)+\bigl(\comb(e_\tau,\tau_1,\tau_2,\tau_1\tau_2,\tau_2,\tau_1,\tau_1\tau_2\tau_1)\bigr)^2\\
&=
e_\sigma+\lambda'(\sigma_2-\sigma_1\sigma_2\sigma_1)+
\comb'(e_\tau,\tau_1,\tau_2,\tau_1\tau_2,\tau_2,\tau_1,\tau_1\tau_2\tau_1).
\end{split}
\end{equation*}
Using linear independence of the group elements in the group algebra we find that $\lambda=0$ and in consequence $\lambda'=0$. This reduces \eqref{no1} and \eqref{no2} to
\begin{equation*}
\begin{split}
\ph(\sigma_1\sigma_2\sigma_1)&=e_\sigma
+\comb(e_\tau,\tau_1,\tau_2,\tau_1\tau_2,\tau_2,\tau_1,\tau_1\tau_2\tau_1),\\
\ph(e_\sigma)&=e_\sigma
+\comb'(e_\tau,\tau_1,\tau_2,\tau_1\tau_2,\tau_2,\tau_1,\tau_1\tau_2\tau_1).
\end{split}
\end{equation*}
Clearly $\ph(e_\sigma)$ is a projection and so must be $\comb'(e_\tau,\tau_1,\tau_2,\tau_1\tau_2,\tau_2,\tau_1,\tau_1\tau_2\tau_1)$. However, the image of $e_\sigma$ under an automorphism cannot be a projection strictly bigger than $e_\sigma$. In order to see this we can realize $A$ concretely as a multi-matrix algebra 
\begin{equation*}
A=\CC\oplus{M_2(\CC)}\oplus\CC\oplus\CC\oplus{M_2(\CC)}\oplus\CC\subset{M_8(\CC)}
\end{equation*}
and use the fact that in an automorphism of a multi-matrix algebra leaves invariant the rank of a matrix. It follows that $\comb'(e_\tau,\tau_1,\tau_2,\tau_1\tau_2,\tau_2,\tau_1,\tau_1\tau_2\tau_1)=0$ and
\begin{equation*}
\ph(e_\sigma)=e_\sigma.
\end{equation*}
The same argument, that the rank of the matrices $\sigma_1\sigma_2\sigma_1$ and $\ph(\sigma_1\sigma_2\sigma_1)$ must be equal to $4$ (both $\sigma_1\sigma_2\sigma_1$ and $e_\sigma$ are invertible in the first summand $\CC[S_3]$ in $A$), shows that the rank of $\comb(e_\tau,\tau_1,\tau_2,\tau_1\tau_2,\tau_2,\tau_1,\tau_1\tau_2\tau_1)$ is zero, so that
\begin{equation*}
\ph(\sigma_1\sigma_2\sigma_1)=e_\sigma.
\end{equation*}
This means that $\sigma_1\sigma_2\sigma_1-e_\sigma$ belongs to the kernel of $\ph$, so that $\ph$ cannot be an automorphism of $A$.

In the paper \cite{F} the list of all $12$-dimensional semisimple Hopf algebras over algebraically closed fields of characteristic different from $2$ and $3$ was given. Apart from commutative and cocommutative ones there are only two non-commutative and non-commutative Hopf algebras in this class. These two are denoted by $A_+$ and $A_-$ respectively. As algebras they are both isomorphic to our algebra $A$. The presentation of $A_\pm$ used in \cite{F} is the following: $A_\pm$ is the algebra generated by three elements $a,b$ and $c$ such that
\begin{equation*}
c^2=c,\quad{a^3=b^2=\I},\quad{bab=a^2},\quad{ac=ca},\quad{bc=cb}.
\end{equation*}
The comultiplications of $A_\pm$ are
\begin{equation*}
\begin{aligned}
\Delta_+(a)&=ac\tens{a}+a(\I-c)\tens{a^2},&\Delta_-(a)&=\Delta_+(a),\\
\Delta_+(b)&=b\tens{b},&\Delta_-(b)&=bc\tens{b}+b(\I-c)\tens{c}(2c-\I),\\
\Delta_+(c)&=c\tens{c}+(\I-c)\tens(\I-c),&\Delta_-(c)&=\Delta_+(c).
\end{aligned}
\end{equation*}
Consider the identification of $A_+$ with $A$ defined by
\[
a\longleftrightarrow\sigma_1\sigma_2+\tau_1\tau_2,\qquad
b\longleftrightarrow{\sigma_1+\tau_1},\qquad
c\longleftrightarrow{e_\sigma}.
\]
This isomorphism of algebras is easily seen to be an anti-isomorphism of coalgebra between $A_+$ and $(A,\Delta_2)$. This means that $\KK_2$ is isomorphic to the Hopf algebra co-opposite to $A_+$. Since the isomorphism classes of $A_+$ and $A_-$ are determined by their groups of group-like elements of the respective Hopf algebras (\cite{F}), we see that $A_\pm$ is isomorphic to its co-opposite Hopf algebra. In particular $\KK_2$ is isomorphic to $A_+$ and consequently $\KK_1$ is isomorphic to $A_-$.

\subsection*{Acknowledgment}

The authors would like to thank Adam Skalski for helpful discussions and suggestions.

\end{document}